\newcommand*{\mailto}[1]{\href{mailto:#1}{\nolinkurl{#1}}}
\newtheorem{theorem}{Theorem}[section]
\newtheorem{lemma}[theorem]{Lemma}
\newtheorem{corollary}[theorem]{Corollary}
\newtheorem{hypothesis}{Hypothesis}
\theoremstyle{definition}
\newtheorem{remark}[theorem]{Remark}
\newtheorem*{acknowledgments}{Acknowledgments}
\newcommand{\R}{{\mathbb R}}
\newcommand{\N}{{\mathbb N}}
\newcommand{\Z}{{\mathbb Z}}
\newcommand{\C}{{\mathbb C}}
\newcommand{\be}{\begin{equation}}
\newcommand{\ee}{\end{equation}}
\newcommand{\ti}{\tilde}
\newcommand{\E}{\mathrm{e}}
\newcommand{\I}{\mathrm{i}}
\newcommand{\tr}{\mathrm{tr}}
\DeclareMathOperator{\lspan}{span}
\DeclareMathOperator{\im}{im}
\DeclareMathOperator{\dom}{dom}
\DeclareMathOperator{\ran}{ran}
\DeclareMathOperator{\assoc}{assoc}
\newcommand{\floor}[1]{\lfloor#1 \rfloor}
\newcommand{\abs}[1]{\left\lvert #1 \right\rvert}
\newcommand{\norm}[1]{\left\lVert #1 \right\rVert}
\newcommand{\inner}[2]{\left\langle#1,#2\right\rangle}
\newcommand{\cH}{\mathcal{H}}
\newcommand{\cB}{\mathcal{B}}
\renewcommand\tilde{\widetilde}
\renewcommand\hat{\widehat}
\newcommand{\eps}{\varepsilon}
\newcommand{\sig}{\sigma}
\newcommand{\lam}{\lambda}
\numberwithin{equation}{section}
\begin{document}

\title[${N}$-entire singular Schr\"odinger operators]{Singular Schr\"odinger operators as self-adjoint extensions of 
	  $\boldsymbol{N}$-entire operators}

\author[L. O. Silva]{Luis O. Silva}
\address{Departamento de F\'{i}sica Matem\'{a}tica\\
Instituto de Investigaciones en Ma{\-}te{\-}m\'{a}{\-}ti{\-}cas Aplicadas
y en Sistemas\\
Universidad Nacional Aut\'{o}noma de M\'{e}xico\\
C.P. 04510, M\'{e}xico D.F.}
\email{\mailto{silva@iimas.unam.mx}}

\author[G. Teschl]{Gerald Teschl}
\address{Faculty of Mathematics\\ University of Vienna\\
Oskar-Morgenstern-Platz 1\\ 1090 Wien\\ Austria\\ and
International Erwin Schr\"odinger Institute for Mathematical Physics\\
 Boltzmanngasse 9\\ 1090 Wien\\ Austria}
\email{\mailto{Gerald.Teschl@univie.ac.at}}
\urladdr{\url{http://www.mat.univie.ac.at/~gerald/}}

\author[J. H. Toloza]{Julio H. Toloza}
\address{CONICET\\ and
Centro de Investigaci\'{o}n en Inform\'{a}tica para la Ingenier\'{i}a\\
Universidad Tecnol\'{o}gica Nacional -- Facultad Regional C\'{o}rdoba\\
Maestro L\'{o}pez s/n\\
X5016ZAA C\'{o}rdoba, Argentina}
\email{\mailto{jtoloza@scdt.frc.utn.edu.ar}}

\thanks{Proc. Amer. Math. Soc. {\bf 143}, 2103--2115 (2015)}
\thanks{\em Research supported by the Austrian Science Fund (FWF) under
Grant No.\ Y330
and by CONICET (Argentina) through grant PIP 112-201101-00245}

\keywords{Schr\"odinger operators, de Branges spaces, Weyl--Titchmarsh--Kodaira
	      theory}
\subjclass[2000]{Primary 34L40, 47B25; Secondary 46E22, 34B20}

\begin{abstract}
  We investigate the connections between Weyl--Titchmarsh--Kodaira
  theory for one-dimensional Schr\"odinger operators and the theory of
  $n$-entire operators.  As our main result we find a necessary and
  sufficient condition for a one-dimensional Schr\"odinger operator to
  be $n$-entire in terms of square integrability of derivatives
  (w.r.t. the spectral parameter) of the Weyl solution. We also show
  that this is equivalent to the Weyl function being in a generalized
  Herglotz--Nevanlinna class. As an application we show that perturbed
  Bessel operators are $n$-entire, improving the previously known
  conditions on the perturbation.
\end{abstract}

\maketitle

\section{Introduction}

The Weyl-Titchmarsh-Kodaira theory for a self-adjoint operator $H$
associated with the differential expression
\begin{equation}
\label{stl0}
\tau := - \frac{d^2}{dx^2} + q(x),\quad -\infty\le a<x<b \le \infty,
\end{equation}
where the potential $q$ is real-valued and satisfies
\begin{equation}
\label{eq:potential}
q \in L^1_{\text{\em loc}}(a,b).
\end{equation}
has been an active and alluring subject of research for long time,
particularly nowadays. The current interest concerns the case where
both endpoints are generically singular. Recent developments show
that, under a necessary and sufficient additional condition on $q(x)$
(see Hypothesis~\ref{hyp:gen} below), there exists an entire system of
fundamental solutions $\phi(z,x),\theta(z,x)$ of the equation
$\tau\varphi=z\varphi$ such that the Wronskian of these two solutions
equals one, and one of the solutions (say $\phi(z,x)$) is in the domain
of $H$ near the left endpoint. A singular Weyl function $M(z)$
(associated with the left endpoint) is then defined as a function that
makes
\begin{equation}
\label{eq:psi-intro}
\psi(z,x):=\theta(z,x)+M(z)\phi(z,x)
\end{equation}
be in the domain of $H$ near the right endpoint (more details are
accounted for in the next section).  As in the regular case, $M(z)$
encodes all the spectral information related to $H$. However, contrary
to the regular case, there is not a natural choice of normalization
for the entire system of fundamental solutions and, therefore, the
singular Weyl function $M(z)$ does not generically belong to a
particular class of functions as in the regular case.

This indeterminacy has been overcome for the class of perturbed
spherical Schr\"odinger operators (also known as Bessel operators), a
class of operators that has attracted considerable interest recently;
see for example
\cite{ahm2,car,car2,bg,je,et,ek,gz,gr,kst,kst2,kst3,kt,lw,lv,se}.  For
this class of operators, and assuming some mild additional conditions
on $q(x)$, a technique for constructing the system of fundamental
solutions $\phi(z,x),\theta(z,x)$ based on Frobenius method has been
proposed as the natural choice. In particular, it has been proved that
in this case, $M(z)$ belongs to a specific class of the generalized
Nevanlinna functions $N_\kappa^\infty$.

In this paper we address the issue of elucidating some properties of
$M(z)$ and the singular Weyl solution $\psi(z,x)$ given in
(\ref{eq:psi-intro}) from a different perspective, although restricted
to the cases where $H$ has only discrete spectrum. Here we consider
$H$ as a self-adjoint extension of some symmetric, regular (hence
completely non-self-adjoint) operator $A$ with deficiency indices
$(1,1)$ (for all the technical definitions see
Section~\ref{sec:n-entire}). Among these operators there exists a
distinguished class, the so-called $n$-entire operators \cite{st1},
defined by the condition
\begin{equation*}
\cH = \ran(A-zI)\dotplus\lspan\{\mu_0+z\mu_1+\cdots+z^n\mu_n\},
\quad z\in\C,
\end{equation*}
for some fixed $\mu_0,\ldots,\mu_n\in\cH$; here $\cH$ stands for the
Hilbert space in which $A$ is defined. As discussed in \cite{st1},
every $n$-entire operator can be unitarily transformed into the
operator of multiplication by the independent variable acting on a de
Branges space $\cB_A$. Moreover, $\cB_A$ is such that the linear
manifold $\assoc_n(\cB_A)$ of $n$-associated functions contains a
zero-free entire function. On the basis of this setup, we establish in
this work an equivalence between (a) the operator $A$ being
$n$-entire, (b) the $(n-1)$-th derivative of the singular Weyl
solution  $\psi(z,x)$ being square integrable with respect
to the spectral measure, and (c) the possibility of choosing the
solution $\theta(z,x)$ such that $M(z)\in N^\infty_{n-1}$. Precise
formulations of this assertion, tied to the fulfillment of an increasing
number of technical conditions, are given in Theorems 
\ref{thm:main-equivalences}, \ref{thm:main-equivalences-true} and
\ref{thm:main-equivalence-complete}. 

Reciprocally, the connection between the Weyl--Titchmarsh--Kodaira
theory for one-dimensional Schr\"odinger operators and the theory of
$n$-entire operators can be exploited in another direction, as it
allows to broaden the classes of differential operators that are known
to be $n$-entire. A first investigation of this matter has been done
in \cite{st2}, where it is shown that perturbed Bessel operators on a
finite interval are $n$-entire, with $n$ given in terms of the angular
momentum number, provided that $q(x)$ obeys a certain rather restrictive
technical condition that arises from the perturbation arguments used
in that paper.  As an application of the results
obtained in this work, we generalize the classes addressed in
\cite{st2} by lifting this technical restriction; this is asserted in
Theorem~\ref{thm:bessel}.

We conclude this introduction with an outline of this paper. All the
relevant aspects of the Weyl--Titchmarsh--Kodaira theory are reviewed
in Section~\ref{sec:swm}. The theory of $n$-entire operators as well
as their connection with the theory of de Branges spaces is briefly
recalled in Section~\ref{sec:n-entire}. Finally,
Section~\ref{sec:main-results} contains the main results of this work.

\section{Singular Weyl--Titchmarsh--Kodaira theory}
\label{sec:swm}

One of our fundamental ingredients will be singular
Weyl--Titchmarsh--Kodaira theory and hence we begin by recalling the
necessary facts from \cite{kst2}.  Consider one-dimensional
Schr\"odinger operators on $L^2(a,b)$ with $-\infty \le a<b \le
\infty$ associated with the differential expression (\ref{stl0}) with
potential (\ref{eq:potential}). We use $H$ to denote a self-adjoint
operator given by $\tau$ with separated boundary conditions at $a$
and/or $b$.  For further background we refer to
\cite[Chap.~9]{tschroe} or \cite{wdln}.

As mentioned in the Introduction, to define the singular Weyl function
at the, in general singular, endpoint $a$, we need a fundamental
system of solutions $\theta(z,x)$ and $\phi(z,x)$ of the equation
$\tau\varphi=z\varphi$ which are entire with respect to $z$ and such
that $\phi(z,x)$ lies in the domain of $H$ near $a$ and the Wronskian
\begin{equation}
  \label{eq:wronskian}
 W(\theta(z),\phi(z)):= \theta(z,x)
\phi'(z,x) - \theta'(z,x)\phi(z,x)\equiv 1.
\end{equation}
Recall that the Wronskian does not depend on $x$ when its arguments
are solutions of the same equation. Thus, (\ref{eq:wronskian}) tells
us that the function of $z$ on the l.\,h.\,s is identically 1.

Denote the restriction of $H$ to $(a,c)$ with a Dirichlet boundary
condition at $c$ by $H^D_{(a,c)}$, i.e., $\dom(H^D_{(a,c)})$ consists
of all functions which are restrictions of functions from $\dom(H)$ to $(a,c)$
and satisfy $f(c)=0$.

\begin{lemma}[\cite{kst2}]\label{lem:pt}
The following properties are equivalent:
\begin{enumerate}
\item The spectrum of $H_{(a,c)}^D$ is purely discrete for some $c\in(a,b)$.
\item There is a real entire solution $\phi(z,x)$, which is
  nontrivial and lies in the domain of $H$ near $a$ for each
  $z\in\C$.
\item There exist real entire solutions $\phi(z,x)$, $\theta(z,x)$ with
  $W(\theta(z),\phi(z))\equiv 1$, such that $\phi(z,x)$
  lies in the domain of $H$ near $a$ for each $z\in\C$.
\end{enumerate}
\end{lemma}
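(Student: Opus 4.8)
The plan is to prove the cycle of implications $(i)\Rightarrow(ii)\Rightarrow(iii)\Rightarrow(i)$, with the bulk of the work going into $(i)\Rightarrow(ii)$. For $(i)\Rightarrow(ii)$: fix $c$ for which $H^D_{(a,c)}$ is purely discrete. Because $\tau$ is in the limit-point or limit-circle case at $a$, the operator $H^D_{(a,c)}$ on $L^2(a,c)$ is a self-adjoint restriction of $\tau$ with a Dirichlet condition at $c$ and the (possibly trivial) boundary condition of $H$ at $a$. Discreteness of its spectrum means its resolvent is compact, and in particular there is a real $\lambda_0$ in the resolvent set, so the inhomogeneous boundary value problem $(\tau-\lambda_0)u=0$ with the $H$-boundary data at $a$ has, up to scalars, a unique nontrivial solution $\phi(\lambda_0,x)$ that lies in $\dom(H)$ near $a$. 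To promote this to an \emph{entire} family, I would use the standard trick of iterating the resolvent: writing $\phi(z,x)$ formally via the Neumann-type series $\phi(z,\cdot)=\sum_{k\ge 0}(z-\lambda_0)^k\,(R_{\lambda_0})^k\phi(\lambda_0,\cdot)$, where $R_{\lambda_0}$ is the resolvent of $H^D_{(a,c)}$ extended by solving the ODE, one checks locally uniform convergence in $x$ and $z$ from the bound $\|R_{\lambda_0}\|<\infty$ together with the local boundedness of solutions of $\tau\varphi=z\varphi$ in the transfer-matrix norm. This is where the discreteness hypothesis is actually used — it guarantees a spectral gap and hence a bounded resolvent with which to run the series — and this convergence/analyticity argument is the main obstacle; one must be careful that the constructed $\phi(z,x)$ genuinely satisfies the ODE and lies in $\dom(H)$ near $a$ for \emph{every} $z$, not merely near $\lambda_0$. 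Reality of $\phi(z,x)$ for real $z$ follows by taking complex conjugates and invoking uniqueness.

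For $(ii)\Rightarrow(iii)$: given the entire solution $\phi(z,x)$ from $(ii)$, construct a second solution $\theta(z,x)$ of $\tau\varphi=z\varphi$ with $W(\theta(z),\phi(z))\equiv 1$. One way is reduction of order: at a fixed regular point $x_0\in(a,b)$ choose initial data for $\theta(z,\cdot)$ so that the Wronskian at $x_0$ equals $1$; since the Wronskian of two solutions of the same equation is $x$-independent this gives $W\equiv 1$ for all $x$. To keep $\theta(z,x)$ entire in $z$ one must choose the initial data at $x_0$ to depend entirely on $z$; since $\phi(z,x_0)$ and $\phi'(z,x_0)$ are entire and cannot vanish simultaneously (else $\phi(z,\cdot)\equiv 0$), one can, for instance, set $\theta(z,x_0):=\overline{\phi(\bar z,x_0)}/|\phi|^2$-type expressions — but to avoid zeros one instead fixes $x_0$ and picks $(\theta(z,x_0),\theta'(z,x_0))$ to be any entire vector with constant Wronskian against $(\phi(z,x_0),\phi'(z,x_0))$, e.g. by solving the linear relation $\theta(z,x_0)\phi'(z,x_0)-\theta'(z,x_0)\phi(z,x_0)=1$ after an entire change of basis; a clean choice is to take the Wronskian normalization at a point where $\phi$ does not vanish and then note that the solution depends analytically on its entire initial data. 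Realness for real $z$ is again arranged by symmetry. The function $\theta(z,x)$ so produced is entire in $z$ for each $x$, being the solution of a linear ODE with entire-in-$z$ coefficients and entire-in-$z$ initial data (analytic dependence of ODE solutions on parameters).

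For $(iii)\Rightarrow(i)$: suppose such entire solutions $\phi,\theta$ exist. Then for each real $z$ in the resolvent set of some self-adjoint realization on $(a,c)$, the resolvent of $H^D_{(a,c)}$ has an integral kernel (Green's function) built from $\phi(z,\cdot)$ (which satisfies the boundary condition of $H$ at $a$) and the solution satisfying the Dirichlet condition at $c$, namely $\theta(z,\cdot)+m_c(z)\phi(z,\cdot)$ for a suitable constant $m_c(z)$; this kernel is Hilbert–Schmidt on the bounded interval $(a,c)$ because both solutions are locally $L^2$, so the resolvent is compact and hence the spectrum of $H^D_{(a,c)}$ is purely discrete. (One only needs this for a single $c$, so no uniformity in $c$ is required.) Assembling the three implications yields the equivalence. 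I expect the analyticity/convergence step inside $(i)\Rightarrow(ii)$ to be the delicate point; everything else is bookkeeping with Wronskians, Green's functions, and analytic dependence of ODE solutions on parameters, all of which may be quoted from \cite{kst2} and \cite[Chap.~9]{tschroe}.
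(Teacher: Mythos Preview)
The paper does not supply its own proof of this lemma; it is quoted verbatim from \cite{kst2}, so there is no in-paper argument to compare against. That said, your cycle contains two genuine gaps.

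In $(i)\Rightarrow(ii)$, the Neumann series $\sum_{k\ge 0}(z-\lambda_0)^k R_{\lambda_0}^k\phi(\lambda_0,\cdot)$ converges only for $|z-\lambda_0|<\|R_{\lambda_0}\|^{-1}=\mathrm{dist}(\lambda_0,\sigma(H^D_{(a,c)}))$, so at best it gives a locally analytic family, not an entire one; discreteness of the spectrum yields compactness of the resolvent but no Schatten bound that would let the series extend to all of $\C$. There is also a more basic problem: $\phi(\lambda_0,\cdot)\notin\dom(H^D_{(a,c)})$ (it does not vanish at $c$), and iterating the resolvent on it does not produce solutions of $(\tau-z)u=0$. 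The construction in \cite{kst2} is different: fix an entire fundamental system $c(z,x),s(z,x)$ normalized at the regular point $c$, write the Weyl solution at $a$ as $c(z,x)+m_-(z)s(z,x)$, use discreteness of $\sigma(H^D_{(a,c)})$ to see that the Herglotz function $m_-$ is meromorphic with simple real poles, and multiply through by a real entire function with simple zeros exactly at those poles (Weierstrass) to obtain the entire $\phi(z,x)$.

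In $(iii)\Rightarrow(i)$, the Hilbert--Schmidt claim fails in general. If $a$ is in the limit-point case the solution satisfying the Dirichlet condition at $c$ is linearly independent of $\phi(z,\cdot)$ and hence is \emph{not} in $L^2$ near $a$ for non-real $z$, so the Green's kernel need not be square integrable on $(a,c)\times(a,c)$; moreover $(a,c)$ need not be bounded. The correct route is to observe that the eigenvalues of $H^D_{(a,c)}$ are precisely the zeros of the nontrivial entire function $z\mapsto\phi(z,c)$, and that the associated $m$-function $-\phi'(z,c)/\phi(z,c)$ is then a \emph{meromorphic} Herglotz function, whose spectral measure is therefore purely atomic; this forces the spectrum to be purely discrete.

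Your $(ii)\Rightarrow(iii)$ is morally right but the key step is glossed over: producing entire initial data $(\theta(z,x_0),\theta'(z,x_0))$ with Wronskian $1$ against $(\phi(z,x_0),\phi'(z,x_0))$ amounts to solving $\alpha(z)\phi'(z,x_0)-\beta(z)\phi(z,x_0)=1$ with $\alpha,\beta$ entire, which uses the Bezout property of the ring of entire functions (Helmer's theorem) applied to the coprime pair $\phi(z,x_0),\phi'(z,x_0)$, followed by a symmetrization to make $\alpha,\beta$ real entire.
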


Thus, for dealing with the singular Weyl function $M(z)$ as defined in
the Introduction it is necessary and sufficient that item (i)
holds. This will be our first hypothesis.

\begin{hypothesis}\label{hyp:gen}
  Suppose that the spectrum of $H^D_{(a,c)}$ is purely discrete for
  one (and hence for all) $c\in (a,b)$.
\end{hypothesis}

Note that this hypothesis is for example satisfied if $q(x) \to
+\infty$ as $x\to a$ (cf.\ Problem~9.7 in \cite{tschroe}).

\begin{remark}\label{rem:uniq}
  It is important to point out that a fundamental system satisfying
  the conditions we have imposed on $\phi(z,x)$ and $\theta(z,x)$ is
  not unique and any other such system is given by
\[
\ti{\theta}(z,x) = \E^{-g(z)} \theta(z,x) - f(z) \phi(z,x), \qquad
\ti{\phi}(z,x) = \E^{g(z)} \phi(z,x),
\]
where $f(z)$, $g(z)$ are entire functions with $f(z)$ real and $g(z)$
real modulo $\I\pi$.  The singular Weyl functions are related via
\[
\ti{M}(z) = \E^{-2g(z)} M(z) + \E^{-g(z)}f(z).
\]
\end{remark}

The singular Weyl function $M(z)$ is by construction analytic in
$\C\backslash\R$ and satisfies $M(z)=M(z^*)^*$.
Recall also from \cite[Lemma~3.3]{kst2} that associated with $M(z)$
there is a spectral measure $\rho$ given by the
Stieltjes--Liv\v{s}i\'{c} inversion formula \be\label{defrho}
\frac{1}{2} \left( \rho\big((x_0,x_1)\big) + \rho\big([x_0,x_1]\big)
\right)= \lim_{\eps\downarrow 0} \frac{1}{\pi} \int_{x_0}^{x_1}
\im\big(M(x+\I\eps)\big) dx.  \ee

In all assertions of this section Hypothesis~\ref{hyp:gen} is assumed.

\begin{theorem}[\cite{gz}]
  Define
  \begin{equation*}
  \hat{f}(\lam) = \lim_{c\uparrow b} \int_a^c \phi(\lam,x)
  f(x) dx,
  \end{equation*}
  where the right-hand side is to be understood as a limit in
  $L^2(\R,d\rho)$. Then the map
  \begin{equation}
    \label{eq:Udir}
 U: L^2(a,b) \to L^2(\R,d\rho),
  \qquad f \mapsto \hat{f},
  \end{equation}
  is unitary and its inverse is given by
  \begin{equation*}
f(x) = \lim_{r\to\infty} \int_{-r}^r \phi(\lam,x)
  \hat{f}(\lam) d\rho(\lam),
  \end{equation*}
  where again the right-hand side is to
  be understood as a limit in $L^2(a,b)$.  Moreover, $U$ maps $H$ to
  multiplication by $\lam$.
\end{theorem}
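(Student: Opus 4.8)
The plan is to prove that $U$ is isometric on a dense subspace by means of the resolvent of $H$ together with the Stieltjes--Liv\v{s}i\'{c} formula \eqref{defrho}, then to extend it by continuity, and finally to establish completeness of the generalized eigenfunction expansion; the intertwining property of $U$ will drop out along the way. The starting point is the Green's function. Since $\phi(z,\cdot)$ lies in the domain of $H$ near $a$, the Weyl solution $\psi(z,\cdot)=\theta(z,\cdot)+M(z)\phi(z,\cdot)$ lies in the domain of $H$ near $b$ by the very definition of $M(z)$, and $W(\psi(z),\phi(z))=W(\theta(z),\phi(z))\equiv1$ by \eqref{eq:wronskian}; hence for $z\notin\spec(H)$ the resolvent is the integral operator with kernel
\[
G(z,x,y)=\begin{cases}\phi(z,x)\,\psi(z,y), & a<x\le y<b,\\ \phi(z,y)\,\psi(z,x), & a<y\le x<b.\end{cases}
\]

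Next I would establish the norm identity on the dense subspace $L^2_c(a,b)$ of compactly supported functions. For real $f\in L^2_c(a,b)$ put $\hat f(z):=\int_a^b\phi(z,x)f(x)\,dx$, an entire function of $z$. The part of $G$ carrying the factor $M(z)$ is the symmetric expression $M(z)\phi(z,x)\phi(z,y)$, while the remaining part $\phi(z,x\wedge y)\theta(z,x\vee y)$ is entire in $z$; substituting $\psi=\theta+M\phi$ into $\inner{(H-z)^{-1}f}{f}=\iint G(z,x,y)f(y)f(x)\,dx\,dy$ therefore gives $\inner{(H-z)^{-1}f}{f}=M(z)\,\hat f(z)^2+E(z)$ with $E$ entire. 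Comparing this with $\inner{(H-z)^{-1}f}{f}=\int_\R(\lam-z)^{-1}\,d\mu_f(\lam)$, where $\mu_f$ is the spectral measure of $f$ relative to $H$, the entire remainder contributes nothing to the boundary jump of the imaginary part, and \eqref{defrho} (together with $\hat f(\lam)$ being real on $\R$) yields $d\mu_f(\lam)=\abs{\hat f(\lam)}^2\,d\rho(\lam)$. In particular $\norm{f}^2=\int_\R\abs{\hat f(\lam)}^2\,d\rho(\lam)$; this extends to complex $f$ by linearity, so $f\mapsto\hat f$ is isometric on $L^2_c(a,b)$, and polarization gives $\inner{E_H(\Omega)f}{g}=\int_\Omega\hat f\,\cc{\hat g}\,d\rho$ for all Borel sets $\Omega$.

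Since $L^2_c(a,b)$ is dense, $f\mapsto\hat f$ extends to an isometry $U\colon L^2(a,b)\to L^2(\R,d\rho)$, and for general $f$ the transform is the stated limit $\lim_{c\uparrow b}\int_a^c\phi(\cdot,x)f(x)\,dx$ in $L^2(\R,d\rho)$, because $\chi_{(a,c)}f\to f$ in $L^2(a,b)$ and $U$ is bounded. The polarized identity shows $\widehat{E_H(\Omega)f}=\chi_\Omega\,\hat f$, so $\ran U$ is invariant under multiplication by characteristic functions, hence by all bounded Borel functions; consequently $U$ carries $E_H(\cdot)$, and therefore $H$, to multiplication by $\lam$ on $\ran U$, and in particular $\widehat{(H-z)^{-1}f}=\hat f/(\lam-z)$ for $z\notin\spec(H)$. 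For surjectivity, let $h\perp\ran U$. Since $\hat f/(\lam-z)=\widehat{(H-z)^{-1}f}\in\ran U$, we get $0=\inner{h}{\widehat{(H-z)^{-1}f}}=\int_\R\frac{\cc{h(\lam)}\,\hat f(\lam)}{\lam-z}\,d\rho(\lam)$ for all $z\in\C\setminus\R$ and all $f\in L^2_c(a,b)$, so the Cauchy transform of the finite complex measure $\cc{h}\,\hat f\,d\rho$ vanishes identically, forcing $\cc{h}\,\hat f=0$ $\rho$-a.e.; since $\hat f(\lam_0)=0$ cannot hold for all $f\in L^2_c(a,b)$ (else $\phi(\lam_0,\cdot)$ would be the trivial solution, against Lemma~\ref{lem:pt}), running $f$ through a suitable countable family gives $h=0$ $\rho$-a.e., so $U$ is unitary. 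Finally, for $h$ with compact support in $\lam$, Fubini and the reality of $\phi$ identify $U^{-1}h=U^*h$ with $x\mapsto\int_\R\phi(\lam,x)h(\lam)\,d\rho(\lam)$, and the general inversion formula is the corresponding $L^2(a,b)$-limit $\lim_{r\to\infty}\int_{-r}^r$.

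The main obstacle is the completeness step, i.e.\ that the measure $\rho$ manufactured from $M(z)$ is maximal and $U$ has full range; one must check with care that the Cauchy-transform argument is legitimate (integrability of $\cc h\,\hat f$ against $\rho$ and the absence of a common zero of all the $\hat f$, the latter resting on $\phi(\lam,\cdot)$ being a nontrivial solution for every $\lam$). An alternative, perhaps more robust, route is to truncate to $(a,c)$ with a Dirichlet condition at $c$, invoke the classical eigenfunction expansion on that interval, and pass to the limit $c\uparrow b$, using that the truncated singular Weyl functions converge to $M(z)$ and hence the associated spectral measures converge to $\rho$.
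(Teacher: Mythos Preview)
The paper does not give a proof of this theorem; it is quoted, with a citation to \cite{gz}, as part of the background material in Section~\ref{sec:swm}. So there is no ``paper's own proof'' to compare against.

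Your argument is the standard one and is essentially correct. A couple of points deserve a closer look. First, the Stieltjes inversion step: you pass from ``the boundary measure of $M$ is $\rho$'' to ``the boundary measure of $M(z)\hat f(z)^2$ is $\hat f(\lam)^2\,d\rho(\lam)$''. This is where the singular case differs from the regular one, because $M$ need not be Herglotz, so a priori you have no growth control on $M(x+\I\eps)$ as $\eps\downarrow 0$. The cleanest way around this is not to estimate $\text{Im}(M\hat f^2)$ directly but to use an integral representation for $M$ such as the one in Theorem~\ref{IntR}: writing $M(z)=E_0(z)+\hat g(z)\int(\tfrac{1}{\lam-z}-\tfrac{\lam}{1+\lam^2})\,\hat g(\lam)^{-1}d\rho(\lam)$, one localizes the integral near a given interval, and on that piece the product with the entire factor $\hat f(z)^2$ can be handled by dominated convergence since $\hat f$ is continuous and the localized measure is finite. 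Alternatively, one can exploit that $M(z)\hat f(z)^2+E(z)=\langle(H-z)^{-1}f,f\rangle$ \emph{is} Herglotz, which already yields the needed $O(\eps^{-1})$ bound for $M\hat f^2$ on any compact interval away from zeros of $\hat f$; the remaining null set of zeros of $\hat f$ carries no $\rho$-mass issue because $\mu_f$ is finite.

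Second, in the surjectivity step your Cauchy-transform argument is fine; to turn ``for each $f$, $\overline h\,\hat f=0$ $\rho$-a.e.'' into ``$h=0$ $\rho$-a.e.'' it suffices to take a countable family such as $f_{p,q}=\chi_{(p,q)}$ with $p,q$ rational: if $h(\lam_0)\ne0$ for some $\lam_0$ in a common full-measure set, then $\int_p^q\phi(\lam_0,x)\,dx=0$ for all rational $p<q$, forcing $\phi(\lam_0,\cdot)\equiv0$, contrary to Lemma~\ref{lem:pt}. Your alternative route via truncation to $(a,c)$ and the limit $c\uparrow b$ is indeed the one taken in \cite{gz}.
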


\begin{remark}
  We have seen in Remark~\ref{rem:uniq} that $M(z)$ is not
  unique. However, given $\ti{M}(z)$ as in Remark~\ref{rem:uniq}, the
  spectral measures are related by
\[
d\ti{\rho}(\lam) = \E^{-2g(\lam)} d\rho(\lam).
\]
Hence the measures are mutually absolutely continuous and the
associated spectral transformations just differ by a simple rescaling
with the positive function $\E^{-2g(\lam)}$.
\end{remark}

Finally, $M(z)$ can be reconstructed from $\rho$ up to an entire
function via the following integral representation.

\begin{theorem}[
\cite{kst2}]\label{IntR}
  Let $M(z)$ be a singular Weyl function and $\rho$ its associated
  spectral measure. Then there exists an entire function $g(z)$ such
  that $g(\lam)\ge 0$ for $\lam\in\R$ and $\E^{-g(\lam)}\in L^2(\R,
  d\rho)$.

  Moreover, for any entire function $\hat{g}(z)$ such that
  $\hat{g}(\lam)>0$ for $\lam\in\R$ and $(1+\lam^2)^{-1}
  \hat{g}(\lam)^{-1}\in L^1(\R, d\rho)$ (e.g.\
  $\hat{g}(z)=\E^{2g(z)}$) we have the integral representation
  \begin{equation*}
     M(z) = E(z) + \hat{g}(z) \int_\R
  \left(\frac{1}{\lam-z} - \frac{\lam}{1+\lam^2}\right)
  \frac{d\rho(\lam)}{\hat{g}(\lam)}, \qquad z\in\C\backslash\sig(H),
  \end{equation*}
  where $E(z)$ is a real entire function.
\end{theorem}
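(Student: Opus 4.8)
My plan is to treat the two assertions of the theorem separately: the existence of a regularising $g$ is a soft construction, whereas for the integral representation I would first prove it for the particular choice $\hat g=\E^{2g}$ by computing a resolvent matrix element in two ways, and then deduce the general case by a pole‑cancellation argument. For the existence of $g$: under Hypothesis~\ref{hyp:gen} the spectral measure $\rho$ is a positive, locally finite Borel measure, so the numbers $m_n:=\rho\big([n,n+1)\big)$, $n\in\Z$, are finite. I would pick a continuous $v\colon\R\to(0,\infty)$ with $v(\lambda)\ge \tfrac12\log(2+m_n)+|n|$ on $[n,n+1)$ and invoke Carleman's approximation theorem to obtain a real entire $G$ with $|G(\lambda)-(\sqrt{v(\lambda)}+1)|<1$ on $\R$; then $G>\sqrt v\ge0$ there, so $g:=G^{2}$ is real entire, nonnegative on $\R$, and $\ge v$ there. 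This forces $\int_\R \E^{-2g}\,d\rho\le\sum_{n}m_n(2+m_n)^{-1}\E^{-2|n|}<\infty$, i.e.\ $\E^{-g}\in L^2(\R,d\rho)$. (If convenient one enlarges $v$ beforehand so that also $\lambda^{k}\E^{-g}\in L^2(\R,d\rho)$ for every $k$.)

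For $\hat g=\E^{2g}$: put $\hat g_0:=\E^{2g}$ and $\delta:=U^{-1}(\E^{-g})\in L^2(a,b)$, with $U$ the unitary transform of \cite{gz} from (\ref{eq:Udir}). Since $U$ diagonalises $H$, for $z\notin\R$ we get $\langle (H-z)^{-1}\delta,\delta\rangle=\int_\R\frac{\E^{-2g(\lambda)}}{\lambda-z}\,d\rho(\lambda)$, an absolutely convergent integral because $\E^{-2g}\in L^1(\R,d\rho)$. On the other hand $(H-z)^{-1}$ has integral kernel $\phi(z,\min(x,y))\,\psi(z,\max(x,y))$ with $\psi(z,x)=\theta(z,x)+M(z)\phi(z,x)$ and $W(\theta(z),\phi(z))\equiv1$ by (\ref{eq:wronskian}); expanding and collecting the terms proportional to $M(z)$,
\[
\langle (H-z)^{-1}\delta,\delta\rangle = A(z)+M(z)\Big(\int_a^b\phi(z,x)\delta(x)\,dx\Big)^{\!2},
\]
with $A(z)$ (the $\theta$-contribution) entire. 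Granting that $\int_a^b\phi(z,x)\delta(x)\,dx$ is the entire function $\E^{-g(z)}$, equating the two expressions and writing $\frac1{\lambda-z}=\big(\frac1{\lambda-z}-\frac\lambda{1+\lambda^2}\big)+\frac\lambda{1+\lambda^2}$ (the last term giving a finite constant $c$) yields
\[
M(z)=E_0(z)+\hat g_0(z)\int_\R\Big(\frac1{\lambda-z}-\frac\lambda{1+\lambda^2}\Big)\frac{d\rho(\lambda)}{\hat g_0(\lambda)},\qquad E_0(z):=\hat g_0(z)\big(c-A(z)\big),
\]
with $E_0$ entire, and real because $M(z^*)=M(z)^*$, $\hat g_0(z^*)=\hat g_0(z)^*$ and $\rho\ge0$.

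For an arbitrary $\hat g$ with $\hat g>0$ on $\R$ and $(1+\lambda^2)^{-1}\hat g^{-1}\in L^1(\R,d\rho)$, it then suffices to show that $\hat g(z)\int_\R\big(\frac1{\lambda-z}-\frac\lambda{1+\lambda^2}\big)\frac{d\rho}{\hat g}-\hat g_0(z)\int_\R\big(\frac1{\lambda-z}-\frac\lambda{1+\lambda^2}\big)\frac{d\rho}{\hat g_0}$ is entire, since it can be absorbed into $E_0$ to produce the required $E$. Merging the integrals, the integrand equals $\big(\frac1{\lambda-z}-\frac\lambda{1+\lambda^2}\big)\big(\hat g(z)\hat g_0(\lambda)-\hat g_0(z)\hat g(\lambda)\big)\big(\hat g(\lambda)\hat g_0(\lambda)\big)^{-1}$; the numerator $\hat g(z)\hat g_0(\lambda)-\hat g_0(z)\hat g(\lambda)$ vanishes at $z=\lambda$, hence is divisible by $z-\lambda$, so the apparent pole at $\lambda=z$ cancels and for each fixed $\lambda$ the integrand is entire in $z$. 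Estimating it by $\mathrm{const}(z)\big[(1+\lambda^2)^{-1}\hat g(\lambda)^{-1}+(1+\lambda^2)^{-1}\hat g_0(\lambda)^{-1}\big]$ and using the two $L^1(\R,d\rho)$ hypotheses gives locally uniform convergence in $z$, so the difference is entire and real, which finishes the argument.

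The soft parts are this pole cancellation and the Green's‑function bookkeeping; the genuine difficulty is the step used above, namely that $\E^{-g}$ is the spectral transform of an $L^2(a,b)$ function $\delta$ for which $x\mapsto\int_a^b\phi(z,x)\delta(x)\,dx$ is an \emph{entire} function of $z$ equal to $\E^{-g(z)}$ — not automatic, since for limit‑point $b$ the solution $\phi(z,\cdot)$ need not be integrable against an arbitrary $L^2$ function near $b$. The way around it, and where the real work lies, is to approximate $\delta$ in $L^2(a,b)$ by compactly supported $\delta_n$ (whose transforms $\widehat\delta_n(z)=\int_a^b\phi(z,x)\delta_n(x)\,dx$ are genuinely entire), to run the identity $\langle (H-z)^{-1}\delta_n,\delta_m\rangle=A_{nm}(z)+M(z)\widehat\delta_n(z)\widehat\delta_m(z)=\int_\R\frac{\widehat\delta_n\widehat\delta_m}{\lambda-z}\,d\rho$ for each pair, and to pass to the limit: unitarity of $U$ together with Cauchy--Schwarz forces $\int_\R\frac{\widehat\delta_n\widehat\delta_m}{\lambda-z}\,d\rho\to\int_\R\frac{\E^{-2g}}{\lambda-z}\,d\rho$ locally uniformly off $\R$, whence the entire pieces $A_{nm}$ and $\widehat\delta_n$ must converge as well, pinning down $E_0$ as a real entire function. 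Making this limiting passage airtight — and choosing $g$ large enough in the first step that all the interchanges of limits and integrals above are legitimate — is the main obstacle.
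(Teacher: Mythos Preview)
The paper does not prove this theorem at all: it is quoted verbatim from \cite{kst2} (note the ``\cite{kst2}'' attribution in the theorem header) and carries no proof here.  So there is no ``paper's own proof'' to compare against.

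Turning to your argument on its own merits: the construction of $g$ via Carleman approximation is clean and correct, and the pole–cancellation reduction from a particular $\hat g_0$ to a general $\hat g$ is the right idea (though you should note that the dominating estimate you quote only gives holomorphy on $\C\setminus\R$; to cross $\R$ you really use that the integrand is entire in $z$ for each fixed $\lambda$, which is fine when $\rho$ is discrete but needs a separate argument in general).  The genuine gap is the one you flag yourself: the identification
\[
\int_a^b \phi(z,x)\,\delta(x)\,dx \;=\; \E^{-g(z)}
\]
as \emph{entire functions of $z$}.  Your approximation scheme gives $\hat\delta_n\to \E^{-g}$ in $L^2(\R,d\rho)$, but $L^2(d\rho)$–convergence of entire functions does not force pointwise convergence at complex $z$, and even if a limit $h(z)$ exists there is no reason it should equal $\E^{-g(z)}$ off $\operatorname{supp}\rho$; indeed $\E^{-g}$ will typically \emph{not} lie in the relevant reproducing-kernel space.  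You can try to rescue the scheme by replacing $\E^{-g}$ with whatever $h$ emerges (since $h=\E^{-g}$ $\rho$-a.e., the $d\rho$–integrals are unaffected), but then you must show $h$ is zero-free, which is a new difficulty.

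The route taken in \cite{kst2} avoids this issue entirely: instead of pairing the resolvent against an abstract $L^2$ vector, one works with the diagonal Green's function $G(z,x,x)=\phi(z,x)\psi(z,x)$, which is an honest Herglotz function for each fixed interior point $x$, together with the fact (their Lemma~3.6, invoked in this paper just after Theorem~\ref{thm:nkap}) that $\phi(\cdot,x),\,\phi'(\cdot,x)\in L^2\bigl(\R,(1+\lambda^2)^{-1}d\rho\bigr)$.  This keeps all the analytic continuation under explicit control and never requires passing an $L^2(d\rho)$ limit to complex $z$.
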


As a consequence one obtains a criterion when the singular Weyl
function is a generalized Nevanlinna function with no nonreal poles
and the only generalized pole of nonpositive type at $\infty$.  We
will denote the set of all such generalized Nevanlinna functions by
$N_\kappa^\infty$ (see Appendix~C \cite{kst2} for a definition and
further references).

\begin{theorem}[
\cite{kst2}]\label{thm:nkap}
  Fix the solution $\phi(z,x)$. Then there exists a corresponding solution
  $\theta(z,x)$ such that $M(z)\in N_\kappa^\infty$ for some
  $\kappa\le k$ if and only if $(1+\lam^2)^{-k-1} \in
  L^1(\R,d\rho)$. Moreover, $\kappa=k$ if $k=0$ or $(1+\lam^2)^{-k}
  \not\in L^1(\R,d\rho)$.
\end{theorem}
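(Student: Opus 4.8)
The plan is to combine the integral representation of Theorem~\ref{IntR} with the description of the class $N_\kappa^\infty$ recalled in Appendix~C of \cite{kst2}, according to which $M\in N_\kappa^\infty$ if and only if
\[
M(z) = P(z) + (1+z^2)^\kappa \int_\R \left(\frac{1}{\lam-z} - \frac{\lam}{1+\lam^2}\right) \frac{d\mu(\lam)}{(1+\lam^2)^\kappa}
\]
for some real polynomial $P$ of degree at most $2\kappa+1$ and some positive Borel measure $\mu$ with $(1+\lam^2)^{-\kappa-1}\in L^1(\R,d\mu)$, the index $\kappa$ being the smallest one with this property. I also record that, once $\phi(z,x)$ is fixed, the only freedom left in Remark~\ref{rem:uniq} is to pass from $\theta$ to $\ti\theta = \theta - f\phi$ with $f$ an arbitrary real entire function; this replaces $M$ by $\ti M = M + f$ and leaves the spectral measure $\rho$ untouched.

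For sufficiency, assume $(1+\lam^2)^{-k-1}\in L^1(\R,d\rho)$ and let $\kappa_0\le k$ be the smallest nonnegative integer with $(1+\lam^2)^{-\kappa_0-1}\in L^1(\R,d\rho)$. Then $\hat g(z)=(1+z^2)^{\kappa_0}$ is an admissible weight in Theorem~\ref{IntR}, so
\[
M(z) = E(z) + (1+z^2)^{\kappa_0}\, N_0(z), \qquad N_0(z) := \int_\R \left(\frac{1}{\lam-z} - \frac{\lam}{1+\lam^2}\right)\frac{d\rho(\lam)}{(1+\lam^2)^{\kappa_0}},
\]
where $E$ is real entire and $N_0$ is a genuine Herglotz--Nevanlinna function because the weighted measure $(1+\lam^2)^{-\kappa_0}d\rho$ satisfies $\int_\R(1+\lam^2)^{-\kappa_0-1}d\rho<\infty$ by the choice of $\kappa_0$. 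Taking $f=-E$, i.e.\ replacing $\theta$ by $\ti\theta=\theta+E\phi$, gives $\ti M(z)=(1+z^2)^{\kappa_0}N_0(z)$, a representation of the above type with $P\equiv 0$, whence $\ti M\in N_\kappa^\infty$ for some $\kappa\le\kappa_0\le k$.

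For necessity, suppose some admissible $\theta$ yields $M\in N_\kappa^\infty$ with $\kappa\le k$, and write $M$ in the displayed form with a measure $\mu$ obeying $(1+\lam^2)^{-\kappa-1}\in L^1(\R,d\mu)$. Changing $\theta$ does not affect $\phi$, so the associated spectral measure is still $\rho$; since $\rho$ is determined by $\im M$ through the Stieltjes--Liv\v{s}i\'c inversion formula \eqref{defrho}, the measure $\mu$ must coincide with $\rho$, and therefore $(1+\lam^2)^{-\kappa-1}\in L^1(\R,d\rho)$, which in view of $\kappa\le k$ implies $(1+\lam^2)^{-k-1}\in L^1(\R,d\rho)$. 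Combining the two directions, the least admissible $\kappa$ is exactly the $\kappa_0$ above: if $k=0$ then $\kappa_0=0=k$, and if $(1+\lam^2)^{-k}\notin L^1(\R,d\rho)$ then $(1+\lam^2)^{-j-1}\notin L^1(\R,d\rho)$ for every $j\le k-1$, so again $\kappa_0=k$.

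These manipulations are straightforward; the real obstacle is the input from \cite[Appendix~C]{kst2}, namely the equivalence between membership in $N_\kappa^\infty$ and the weighted integral representation with a polynomial of the correct degree, together with the sharp identification of the minimal index $\kappa$ in terms of the integrability of the moments of the measure. A secondary point deserving a word of care is that subtracting the real entire function $E(z)$ furnished by Theorem~\ref{IntR} does not move us outside every class $N_\kappa^\infty$; this is transparent here since the remainder is precisely $(1+z^2)^{\kappa_0}$ times a bona fide Herglotz--Nevanlinna function.
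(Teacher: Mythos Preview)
The paper does not supply its own proof of this theorem; it is quoted verbatim from \cite{kst2}, so there is nothing in the present paper to compare against. Your argument is the natural one and matches the route taken in \cite{kst2}: use the integral representation of Theorem~\ref{IntR} with the polynomial weight $\hat g(z)=(1+z^2)^{\kappa_0}$, subtract the resulting real entire remainder $E(z)$ via the freedom in Remark~\ref{rem:uniq}, and invoke the characterization of $N_\kappa^\infty$ from Appendix~C of \cite{kst2}. The identification $\mu=\rho$ in the necessity direction is justified exactly as you indicate, since the polynomial part is real and the factor $(1+\lam^2)^\kappa$ is real and positive on $\R$, so Stieltjes--Liv\v{s}i\'c recovers the same measure.

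One small point worth making explicit in the ``moreover'' part: you correctly observe that necessity gives $\kappa\ge\kappa_0$ for \emph{every} admissible choice of $\theta$, while sufficiency produces a particular $\theta$ achieving $\kappa\le\kappa_0$; together these pin down the minimal attainable index as $\kappa_0$. You state this but it would read more cleanly if you said outright that the minimum of $\kappa$ over all choices of $\theta$ equals $\kappa_0$, rather than leaving the quantifier implicit.
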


In order to identify possible values of $k$ one can try to bound
$\lam^{-k}$ by a linear combination of $\phi(\lam,x)^2$ and
$\phi'(\lam,x)^2$ which are in $L^1(\R, (1+\lam^2)^{-1}d\rho)$ by
\cite[Lemma~3.6]{kst2}.

\begin{remark}\label{rem:herg}
  Choosing a real entire function $g(z)$ such that $\exp(-2g(\lam))$
  is integrable with respect to $d\rho$, we see that
  \begin{equation*}
    M(z) =
  \E^{2g(z)} \int_\R \frac{1}{\lam-z} \E^{-2g(\lam)}d\rho(\lam) -
  E(z).
  \end{equation*}
  Hence if we choose $f(z) = \exp(-g(z)) E(z)$ and switch
  to a new system of solutions as in Remark~\ref{rem:uniq}, we see
  that the new singular Weyl function is a Herglotz--Nevanlinna
  function
  \begin{equation*}
    \ti{M}(z) = \int_\R \frac{1}{\lam-z}
  \E^{-2g(\lam)}d\rho(\lam).
  \end{equation*}
\end{remark}

\section{$N$-entire operators and de Branges spaces}
\label{sec:n-entire}
In a separable Hilbert space, let $A$ be a closed symmetric operator
with deficiency indices $(1,1)$ such that, for every $z\in\C$, there
is a positive constant $c_z$ for which
\begin{equation}
  \label{eq:regular}
  \norm{(A-zI)f}\ge c_z\norm{f},\quad\forall f\in\dom(A).
\end{equation}
The operator $A$ is said to be $n$-entire ($n\in\Z^+$) if moreover
 there exist
$n+1$ vectors $\mu_0,\ldots,\mu_n\in\cH$ such that
\begin{equation}
\label{eq:n-entire}
\cH = \ran(A-zI)\dotplus\lspan\{\mu_0+z\mu_1+\cdots+z^n\mu_n\}
\end{equation}
for all $z\in\C$ \cite{st1}. The operator $A$ is
called minimal $n$-entire whenever there is no smaller $n$ with
this property. Notice that a necessary but not sufficient condition
for $A$ to be minimal $n$-entire is that $\mu_n\ne 0$.

To any closed symmetric operator $A$ with deficiency indices $(1,1)$
satisfying (\ref{eq:regular}), there corresponds a de Branges space
$\cB_A$ in which the operator becomes multiplication by the
independent variable \cite{st1}. Recall that a de Branges Hilbert
space is a linear manifold given by
\begin{equation}
  \label{eq:dB}
 \cB:= \{F(z)\text{ entire}: F(z)/E(z), F^\#(z)/E(z)\in H^2(\C^+)\}
\end{equation}
with the inner product
\begin{equation}
  \label{eq:dB-inner-product}
  \inner{G}{F}_{\cB}:=\frac{1}{\pi}\int_\R\frac{G^*(x)F(x)}{\abs{E(x)}^2}dx\,,
\end{equation}
where $E(z)$ is in the Hermite--Biehler class, that is, an entire
function such that
$\abs{E(z)}>\abs{E(z^*)}$ for all $z\in\C^+$. Above we have used the
notation $F^\#(z):=F(z^*)^*$.
A de Branges space is a reproducing kernel Hilbert space with
reproducing kernel
\begin{equation*}
K(z,w) = \begin{cases}
		 \dfrac{E^\#(z)E(w^*) - E(z)E^\#(w^*)}{2\pi\I(z-w^*)},
					&w\ne z^*,
		 \\[2mm]
		 \frac{1}{2\pi\I}\left[E^{\#'}(z)E(z) - E'(z)E^\#(z)\right],
					&w=z^*.
		 \end{cases}
\end{equation*}
For further details we refer to \cite{debranges}.

Given $n\in\Z^+$, the set of $n$-associated functions is defined by
\begin{equation}
  \label{eq:assoc}
  \assoc_n(\cB):=\cB +z\cB+\dots+z^n\cB\,.
\end{equation}
Clearly, $\assoc_n(\cB)\subset\assoc_{n+1}(\cB)$ for $n\in\Z^+$. Also,
it is straightforward to verify that $E(z)$ is in $\assoc_1(\cB)$ but
not in $\cB$ \cite{debranges}.

\begin{lemma}[\cite{st1}]
  \label{lem:entire-n-associated}
  The operator $A$ is $n$-entire if and only if $\assoc_n(\cB_A)$
  contains a zero-free function. Moreover, $A$ is minimal $n$-entire
  if and only if, additionally, no zero-free function lies in
  $\assoc_m(\cB_A)$ for every $m<n$.
\end{lemma}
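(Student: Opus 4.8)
The plan is to establish the two biconditionals by passing through the canonical model in which $A$ acts as multiplication by the independent variable on the de Branges space $\cB_A$, and to identify $\assoc_n(\cB_A)$ concretely. Recall from \cite{st1} that the unitary map $\cH\to\cB_A$ sends a vector $f$ to the entire function $z\mapsto\inner{f}{\cdot}$ built from a chain of generalized eigenvectors; under this identification $\ran(A-zI)$ corresponds to the set of functions in $\assoc_1(\cB_A)$ (or $\cB_A$ itself, after the appropriate normalization) vanishing at $z$, and $(A-zI)^{-1}$ restricted to the orthocomplement of the deficiency space is multiplication by $(\cdot-z)^{-1}$. The first thing I would do is record the elementary fact, already noted in the excerpt, that $\assoc_n(\cB)=\cB+z\cB+\dots+z^n\cB$ is a nested increasing family, and that a function $F\in\assoc_n(\cB)$ admits a (non-unique) representation $F(z)=F_0(z)+zF_1(z)+\dots+z^nF_n(z)$ with $F_j\in\cB$.

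First I would prove the ``only if'' direction. Suppose $A$ is $n$-entire, witnessed by $\mu_0,\dots,\mu_n\in\cH$ as in \eqref{eq:n-entire}. Apply the unitary transformation to the $\cB_A$-picture and let $M_j(z)$ be the entire function corresponding to $\mu_j$; set $G(z):=M_0(z)+zM_1(z)+\dots+z^nM_n(z)$, which lies in $\assoc_n(\cB_A)$ by definition. The decomposition \eqref{eq:n-entire} says that for every $w\in\C$ the vector $\mu_0+w\mu_1+\dots+w^n\mu_n$ is not in $\ran(A-wI)$; but in the model, membership of a vector in $\ran(A-wI)$ is exactly the vanishing at $w$ of the corresponding associated function (this is the key structural fact from \cite{st1} that I would cite, being careful about whether one works modulo the span of the reproducing kernel). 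Hence $G(w)\ne0$ for all $w\in\C$, so $\assoc_n(\cB_A)$ contains the zero-free function $G$.

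For the converse, suppose $G\in\assoc_n(\cB_A)$ is zero-free, with a representation $G(z)=G_0(z)+\dots+z^nG_n(z)$, $G_j\in\cB_A$. Pull the $G_j$ back through the unitary map to vectors $\mu_j\in\cH$. The content of \eqref{eq:n-entire} is then a direct-sum statement: I need that $\ran(A-wI)$ and $\C\cdot(\mu_0+\dots+w^n\mu_n)$ together span $\cH$ and intersect trivially. The intersection is trivial because a nonzero multiple of the polynomial-combination vector lies in $\ran(A-wI)$ only if $G(w)=0$, contradicting zero-freeness; the spanning follows because $\ran(A-wI)$ has codimension one (deficiency indices $(1,1)$, and \eqref{eq:regular} guarantees the range is closed), so adding any vector outside it fills up $\cH$. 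That $\mu_0+\dots+w^n\mu_n$ lies outside $\ran(A-wI)$ is again the zero-freeness of $G$ at $w$. The ``moreover'' (minimality) statement then follows by applying the equivalence at each level $m<n$: $A$ fails to be $m$-entire precisely when $\assoc_m(\cB_A)$ has no zero-free element, so minimal $n$-entire corresponds to $n$ being the least index at which a zero-free associated function appears.

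The main obstacle I anticipate is bookkeeping at the interface between the abstract decomposition \eqref{eq:n-entire} and the function-theoretic model: one must pin down exactly which ``associated function'' corresponds to a given vector, how the reproducing kernel $K(w,\cdot)$ (which represents evaluation at $w$) interacts with $\ran(A-wI)$, and whether the correspondence ``$f\in\ran(A-wI)\iff$ (its function) vanishes at $w$'' holds on the nose or only modulo a one-dimensional correction. Getting this dictionary precisely right — essentially unwinding the construction of $\cB_A$ from \cite{st1} — is where the real work lies; once it is in place, both implications are short.
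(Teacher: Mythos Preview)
The paper does not prove this lemma; it is quoted verbatim from \cite{st1} with no argument supplied, so there is no in-paper proof to compare your proposal against. Your sketch is the natural one and matches the strategy of \cite{st1}: pass to the de~Branges model where $A$ is multiplication by the independent variable, use that $\ran(A-wI)=\{F\in\cB_A:F(w)=0\}$ (which follows from regularity \eqref{eq:regular} together with the division property of de~Branges spaces), and observe that the image of $\mu_0+w\mu_1+\cdots+w^n\mu_n$ under the unitary is exactly the evaluation at $w$ of $G(z)=M_0(z)+zM_1(z)+\cdots+z^nM_n(z)\in\assoc_n(\cB_A)$. Your caveat about the ``dictionary'' is well placed: the only nontrivial point is that $F\in\cB_A$ with $F(w)=0$ implies $F(z)/(z-w)\in\dom(A)$, and this is precisely where one needs that $A$ is regular (so every real point is a point of regular type) and that the de~Branges space is closed under the corresponding difference quotient; once that is established the rest is exactly as you outline. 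The minimality clause is, as you say, just the first equivalence applied level by level.
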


\begin{theorem}[\cite{st1}]
\label{thm:n-entire}
The following statements are equivalent:
\begin{enumerate}
\item The operator $A$ is $n$-entire.
\item Let $A_{\beta_1}$ and $A_{\beta_2}$, $\beta_1\ne \beta_2$,
	be 	canonical self-adjoint extensions of $A$ (that is, self-adjoint 
	restrictions of $A^*$). Set
	$\{x_j\}_{j\in\mathbb{N}}
	=\{x_j^+\}_{j\in\mathbb{N}}\cup\{x_j^-\}_{j\in\mathbb{N}}
	=\sigma(A_{\beta_1})$, where $\{x_j^+\}_{j\in\mathbb{N}}$ and
  	$\{x_j^-\}_{j\in\mathbb{N}}$ are the sequences of positive,
  	respectively nonpositive, elements of $\sigma(A_{\beta_1})$,
  	arranged according to increasing modulus. Then the following
  	assertions hold true:
	\begin{enumerate}[(C1)]
	\item The limit
		$\displaystyle{\lim_{r\to\infty}\sum_{0<|x_j|\le r}
		\frac{1}{x_j}}$
		exists.
	\item $\displaystyle{\lim_{j\to\infty}\frac{j}{x_j^{+}}
		=- \lim_{j\to\infty}\frac{j}{x_j^{-}}<\infty}$.
	\item Assuming that $\{b_j\}_{n\in\mathbb{N}}=\sigma(A_\beta)$,
		define\\[3mm]
		$\displaystyle
		h_\beta(z):=\left\{\begin{array}{ll}
			\displaystyle{\lim_{r\to\infty}\prod_{|b_j|\le r}
			\left(1-\frac{z}{b_j}\right)}
				& \mbox{ if }0\not\in\sigma(A_\beta),
			\\
			\displaystyle{z\lim_{r\to\infty}\prod_{0<|b_j|\le r}
			\left(1-\frac{z}{b_j}\right)}
				& \mbox{ otherwise. }
			   \end{array}\right.
		$\\[2mm]
		The series
		$\displaystyle{
		\sum_{x_j\ne 0}\abs{\frac{1}
		{x_j^{2n}h_{\beta_2}(x_j)h_{\beta_1}'(x_j)}}}$ is
		convergent.
\end{enumerate}
\end{enumerate}
\end{theorem}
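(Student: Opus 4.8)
The plan is to reduce the theorem to the characterization provided by Lemma~\ref{lem:entire-n-associated}: the operator $A$ is $n$-entire precisely when $\assoc_n(\cB_A)$ contains a zero-free entire function. Since $A$ is the multiplication operator on $\cB_A$ and $A_{\beta_1}$, $A_{\beta_2}$ are canonical self-adjoint extensions, their spectra are the zero sets of functions of the form appearing in (C3), up to a nonvanishing exponential factor; more precisely, one can represent $\cB_A$ via a Hermite--Biehler function $E = B - \I C$ whose real and imaginary parts $B$, $C$ have as zero sets the spectra of two distinct canonical extensions. A natural candidate zero-free function in $\assoc_n(\cB_A)$ is then built as a product over the zeros, i.e.\ essentially $h_{\beta_1}(z) h_{\beta_2}(z)$ divided by (or multiplied by) suitable polynomial normalizations, and the point is to decide when such a product lands in $\assoc_n(\cB_A) = \cB_A + z\cB_A + \dots + z^n \cB_A$.

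First I would fix the concrete model: write $\cB_A = \cB(E)$ with $E$ Hermite--Biehler, set $B = \tfrac12(E+E^\#)$ and $C = \tfrac{\I}{2}(E-E^\#)$, and identify $\sigma(A_{\beta_1})$ with the zeros of (a linear combination giving) one of these, say the zeros $\{x_j\}$, and $\sigma(A_{\beta_2})$ with the zeros $\{b_j\}$ of $h_{\beta_2}$. Conditions (C1) and (C2) are exactly what is needed for the canonical (Hadamard) products $h_{\beta_1}$ and $h_{\beta_2}$ to converge and to be of the correct exponential type / genus so that they are, up to a real exponential factor, the entire functions whose zero sets are those spectra; this is the classical de Branges/Krein description of the spectra of self-adjoint extensions of a symmetric operator with deficiency indices $(1,1)$. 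So the first two bullets are essentially convergence/normalization hypotheses ensuring the objects in (C3) are well defined, and I would dispatch them by citing the standard theory of de Branges spaces and Krein's formula.

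The substance is in (C3). Using the reproducing kernel formula and the inner product (\ref{eq:dB-inner-product}), membership of an entire function $F$ in $\assoc_n(\cB_A)$ can be tested by expanding along an orthogonal basis of $\cB_A$ given by reproducing kernels at the zeros $x_j$ of, say, $B$ (or $C$): the functions $K(\cdot, x_j)$, suitably normalized, form an orthogonal basis, and $F \in \cB_A$ iff $\sum_j |F(x_j)|^2 / \|K(\cdot,x_j)\|^2 < \infty$, with the analogous statement for $\assoc_n$ after peeling off the polynomial growth, i.e.\ testing $F(z)/(1+z^2)^{\text{something}}$ or equivalently asking $\sum_j |F(x_j)|^2 / (x_j^{2n}\|K(\cdot,x_j)\|^2) < \infty$. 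Now $\|K(\cdot, x_j)\|^2 = K(x_j,x_j)$ is, by the $w = z^*$ branch of the kernel formula, essentially $B'(x_j) C(x_j)$ (a Wronskian-type expression), and up to the real exponential normalization these are $h_{\beta_1}'(x_j)$ and $h_{\beta_2}(x_j)$ respectively. Choosing the test function $F = h_{\beta_1} h_{\beta_2}$ — which is zero-free if and only if we can make it so, and which vanishes at each $x_j$ only to first order with $F'(x_j) = h_{\beta_1}'(x_j) h_{\beta_2}(x_j)$ — one finds that the relevant summability condition for $z^{-n} \cdot (\text{stuff})$-type membership becomes exactly $\sum_{x_j \neq 0} |x_j^{2n} h_{\beta_2}(x_j) h_{\beta_1}'(x_j)|^{-1} < \infty$. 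Running this computation in both directions gives the equivalence of (i) and (ii).

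I expect the main obstacle to be bookkeeping with the various exponential/polynomial normalizations: $h_{\beta_1}, h_{\beta_2}$ are canonical products, not literally $B$, $C$, so they differ from the "true" de Branges building blocks by a factor $\E^{\alpha z}$ (real $\alpha$) and possibly by monomials $z^k$ accounting for a zero at the origin, and one must check that these factors are harmless — the exponential is bounded on $\R$ in modulus only after pairing $\beta_1$ with $\beta_2$ so that the factors cancel, and the monomial corrections shift $n$ by a controlled amount. Keeping careful track of whether the zero $0 \in \sigma(A_\beta)$ (hence the case distinction in the definition of $h_\beta$) and making sure the summation index ``$x_j \neq 0$'' versus ``all $x_j$'' is treated consistently is where errors would creep in. A secondary subtlety is justifying that $F = h_{\beta_1} h_{\beta_2}$ (or the minimal modification making it zero-free) is the \emph{optimal} choice, i.e.\ that if \emph{some} zero-free function lies in $\assoc_n(\cB_A)$ then this particular one does too; this follows because any zero-free $G \in \assoc_n$ has $|G|$ comparable on the sampling sequence to $|h_{\beta_1} h_{\beta_2}|$ up to polynomial factors, by a Phragmén--Lindelöf / division argument, so convergence of the series is insensitive to the choice. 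Once these normalization issues are pinned down, the equivalence is a direct translation of the reproducing-kernel sampling criterion, and the minimality addendum in Lemma~\ref{lem:entire-n-associated} propagates automatically.
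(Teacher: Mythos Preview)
This theorem is not proved in the paper under review: it is quoted from \cite{st1} (note the bracketed citation in the theorem header) and is stated without any accompanying proof environment. There is therefore no ``paper's own proof'' to compare your proposal against; the argument lives entirely in \cite{st1}.

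On the sketch itself: the overall strategy---passing through Lemma~\ref{lem:entire-n-associated} and then testing membership in $\assoc_n(\cB_A)$ via a sampling expansion along the reproducing kernels at $\sigma(A_{\beta_1})$---is the right one and is indeed what \cite{st1} does. However, the central paragraph is internally inconsistent. You write ``Choosing the test function $F = h_{\beta_1} h_{\beta_2}$ --- which is zero-free if and only if we can make it so, and which vanishes at each $x_j$ only to first order''; these two clauses contradict each other, and in fact $h_{\beta_1} h_{\beta_2}$ has zeros exactly at $\sigma(A_{\beta_1})\cup\sigma(A_{\beta_2})$ and is never zero-free. The functions $h_{\beta_1}$ and $h_{\beta_2}$ are not the test function; they enter only through the sampling \emph{weights}, since $K(x_j,x_j)$ is (up to the exponential normalizations you correctly flag) a constant multiple of $|h_{\beta_1}'(x_j)\, h_{\beta_2}(x_j)|$. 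The object whose membership in $\assoc_n(\cB_A)$ one actually tests is a zero-free function such as the constant $1$ or an exponential $\E^{\alpha z}$, and it is for that choice that the sampling sum collapses to the series in (C3). With this correction your outline matches the argument in \cite{st1}, but as written the key step is garbled.
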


\section{Main results}
\label{sec:main-results}

We begin this section by introducing and discussing a hypothesis that
is used to obtain the auxiliary results leading to the main ones.

\begin{hypothesis}
\label{hyp:main}
\begin{enumerate}
\item Suppose $H$ has purely discrete spectrum and let $\phi(z,x)$,
  $\chi(z,x)$ be entire solutions such that $\phi(z,x)$ is in the
  domain of $H$ near $a$ and $\chi(z,x)$ is in the domain of $H$ near
  $b$. Abbreviate
	\begin{equation}
	\label{eq:entire-wronskian}
	W(z) := W(\phi(z),\chi(z))
	\end{equation}
	which is of course also entire.

\item For every compact subset $K$ of $\C\times\rho(H)$, there exists $
	F\in L^1(a,b)$ such that
	\begin{equation*}
	\abs{\phi(w,x)\chi(z,x)} \le F(x)
	\end{equation*}
	for every $(w,z)\in K$.
\item We have
	\begin{equation*}
	\lim_{x\downarrow a} W_x(\phi(w), \chi(z)) = W(z)  \quad\text{and}\quad
	\lim_{x\uparrow b} W_x(\phi(w), \chi(z)) = W(w),
	\end{equation*}
	where the Wronskian here depends on $x$ since $\phi(w,x)$ and
	$\chi(z,x)$ are solutions of equations with different spectral parameters.
\end{enumerate}
\end{hypothesis}

Item (i) above amounts to assume that $M(z)$, hence the Weyl solution
$\psi(z,x)$, is a meromorphic function with (necessarily simple) poles
at $\sigma(H)$. Thus, given an entire function $W(z)$ whose zero set
includes $\sigma(H)$,
\begin{equation}
\label{eq:chips}
\chi(z,x)= W(z) \psi(z,x)
\end{equation}
is the entire solution that obeys \eqref{eq:entire-wronskian}. Note by
the way that this item implies Hypothesis~\ref{hyp:gen}.

The reason for Hypothesis~\ref{hyp:main} --- in particular, items (ii)
and (iii) --- will become clear later. For now we just point out that
it holds for example if both endpoints are in the limit circle case as
can be seen from Appendix~A in \cite{kst2} (for item (ii) see the
proof of Lemma~A.3 and for (iii) use $\chi(z,x)=
W(\chi(z),\phi(z))\theta(z,x) - W(\chi(z),\theta(z)) \phi(z,x)$ plus
Corollary~A.4). In the general case we first show that the items in
the above hypothesis are not independent. Our first result exploits
the fact that
\begin{equation*}
G(z,x,x) =\frac{\phi(z,x)\chi(z,x)}{W(z)}
\end{equation*}
is the diagonal of the kernel of the resolvent of $H$.

\begin{lemma}
  Consider the condition
\begin{equation}
\label{eq:hyp}
\int_a^b |\phi(z,x)\chi(z,x)| dx < \infty
\end{equation}
for some $z\in \C$.
\begin{enumerate}
\item Assume that $H$ is bounded from below. The inequality \eqref{eq:hyp}
  holds for one (and hence for all) $z< \inf\sig(H)$ if and only if
  $(H-z)^{-1}$ is trace class. In this case we have
\begin{equation*}
\frac{1}{W(z)}
  \int_a^b \phi(z,x) \chi(z,x) dx = \tr\big( (H-z)^{-1} \big), \qquad
  z\in\rho(H).
\end{equation*}
\item If \eqref{eq:hyp} holds for one $z\in\C\setminus\R$ then it
  holds for all $z\in\rho(H)$ and $(H-z)^{-1}$ is Hilbert--Schmidt.
\end{enumerate}
\end{lemma}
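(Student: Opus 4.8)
The plan is to base everything on the identification, recalled immediately before the statement, of $\phi(z,x)\chi(z,x)/W(z)$ with the diagonal $G(z,x,x)$ of the resolvent kernel of $H$, together with one standard fact about positive integral operators: if $S\ge 0$ is a bounded operator on $L^2(a,b)$ with continuous integral kernel $s(x,y)$, then $S$ is trace class iff $x\mapsto s(x,x)$ is integrable, and then $\tr S=\int_a^b s(x,x)\,dx$. I would either quote this from the literature on kernels of trace class operators, or derive it by exhausting $(a,b)$ by compact subintervals, applying Mercer's theorem on each, and passing to the limit: if $P_n\uparrow I$ are the corresponding multiplication projections, then $S^{1/2}P_nS^{1/2}\uparrow S$, whence $\tr(P_nSP_n)\uparrow\tr S$, while the associated diagonal integrals increase to $\int_a^b s(x,x)\,dx$ since the diagonal of a positive kernel is nonnegative. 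In our setting $G(z,\cdot,\cdot)$ is jointly continuous because $\phi(z,\cdot)$ and $\chi(z,\cdot)$ solve a Schr\"odinger equation with locally integrable potential.

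For (i), I would fix $z<\inf\sigma(H)$, so that $(H-z)^{-1}\ge 0$ and hence $G(z,x,x)\ge 0$; then $\abs{\phi(z,x)\chi(z,x)}=\abs{W(z)}\,G(z,x,x)$, so \eqref{eq:hyp} (at this $z$) is equivalent to $\int_a^b G(z,x,x)\,dx<\infty$, which by the trace fact is exactly the property that $(H-z)^{-1}$ is trace class, in which case $\tr\big((H-z)^{-1}\big)=\int_a^b G(z,x,x)\,dx=\tfrac{1}{W(z)}\int_a^b\phi(z,x)\chi(z,x)\,dx$. The trace class property at one such point then propagates to every $z\in\rho(H)$ because the trace ideals are two-sided ideals (use the first resolvent identity), and the trace formula extends from $z<\inf\sigma(H)$ to all of $\rho(H)$ by analytic continuation: both sides are analytic on the connected set $\rho(H)$ --- the left because $(H-z)^{-1}$ depends trace-norm-analytically on $z$, the right because Hypothesis~\ref{hyp:main}(ii), taken along the diagonal $w=z$, bounds $\abs{\phi(z,\cdot)\chi(z,\cdot)}$ by a fixed $L^1(a,b)$ function uniformly on compact subsets of $\rho(H)$.

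For (ii), suppose \eqref{eq:hyp} holds at one $z_0\in\C\setminus\R$. I would first show $(H-z_0)^{-1}$ is Hilbert--Schmidt. The operator $S:=(H-z_0^*)^{-1}(H-z_0)^{-1}=\abs{(H-z_0)^{-1}}^2\ge 0$ equals, by the first resolvent identity, $\tfrac{1}{2\I\,\im z_0}\big[(H-z_0)^{-1}-(H-z_0^*)^{-1}\big]$, so its (jointly continuous) kernel is $\im G(z_0,x,y)/\im z_0$, with diagonal $\im G(z_0,x,x)/\im z_0\ge 0$ dominated by $\abs{G(z_0,x,x)}/\abs{\im z_0}$. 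Applying the trace fact to $S$ gives
\[
\norm{(H-z_0)^{-1}}_{\mathrm{HS}}^2=\tr S=\int_a^b\frac{\im G(z_0,x,x)}{\im z_0}\,dx\le\frac{1}{\abs{\im z_0}\,\abs{W(z_0)}}\int_a^b\abs{\phi(z_0,x)\chi(z_0,x)}\,dx<\infty .
\]
Since the Hilbert--Schmidt class is an ideal, $(H-z)^{-1}$ is Hilbert--Schmidt for every $z\in\rho(H)$. To recover \eqref{eq:hyp} at an arbitrary $z\in\rho(H)$, I would restrict the first resolvent identity to the diagonal of the kernels (legitimate because all kernels involved are continuous), $G(z,x,x)=G(z_0,x,x)+(z-z_0)\int_a^b G(z,x,s)G(z_0,s,x)\,ds$, integrate in $x$, and estimate the double integral by the Cauchy--Schwarz inequality in $L^2\big((a,b)^2\big)$: this yields $\int_a^b\abs{G(z,x,x)}\,dx\le\int_a^b\abs{G(z_0,x,x)}\,dx+\abs{z-z_0}\,\norm{(H-z)^{-1}}_{\mathrm{HS}}\,\norm{(H-z_0)^{-1}}_{\mathrm{HS}}<\infty$, that is, $\int_a^b\abs{\phi(z,x)\chi(z,x)}\,dx=\abs{W(z)}\int_a^b\abs{G(z,x,x)}\,dx<\infty$.

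Modulo the trace fact, the rest is routine. The delicate points are (a) making the trace fact precise on the possibly infinite interval $(a,b)$ with its $\sigma$-finite Lebesgue measure, and (b) the step from equality of operators to equality of kernels \emph{on the diagonal} in the resolvent identities --- both resolved by the joint continuity of $G(z,\cdot,\cdot)$ and by $\phi(z,\cdot),\chi(z,\cdot)$ belonging to $L^2$ near $a$, respectively $b$. I expect cleanly establishing or citing the trace fact to be the principal obstacle.
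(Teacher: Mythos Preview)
Your approach is essentially the one the paper takes: the ``trace fact'' for positive continuous-kernel operators is exactly the lemma from Reed--Simon the paper cites for (i), and your argument for (ii) (pass to $S=\lvert(H-z_0)^{-1}\rvert^2$, identify its kernel as $\im G(z_0,x,y)/\im z_0$ via the first resolvent identity, apply the trace fact, then propagate via the kernel form of the resolvent identity and Cauchy--Schwarz) is precisely what underlies the paper's citations of Lemma~9.12 in \cite{tschroe} and the displayed resolvent formula. So the strategies coincide; you have simply unpacked what the paper delegates to references.

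There is one point to flag. In (i) you extend the trace formula from $z<\inf\sigma(H)$ to all of $\rho(H)$ by analytic continuation, and you justify analyticity of $z\mapsto\int_a^b\phi(z,x)\chi(z,x)\,dx$ by invoking item~(ii) of Hypothesis~\ref{hyp:main}. But this lemma does not assume that item --- indeed, the lemma is meant to exhibit relations \emph{among} the items of Hypothesis~\ref{hyp:main}, and if item~(ii) were in force then \eqref{eq:hyp} would hold for every $z\in\rho(H)$ a priori, trivializing much of the content. The paper sidesteps this by citing Brislawn's theorem \cite{br}, which gives $\tr\big((H-z)^{-1}\big)=\int_a^b G(z,x,x)\,dx$ directly for each $z\in\rho(H)$ once the resolvent is trace class, without appealing to positivity or analyticity. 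Your analytic-continuation route can be repaired without Hypothesis~\ref{hyp:main}(ii): once $(H-z)^{-1}$ is trace class (hence Hilbert--Schmidt) for all $z\in\rho(H)$, the diagonal resolvent identity you use in (ii) yields
\[
\int_a^b G(z,x,x)\,dx=\int_a^b G(z_0,x,x)\,dx+(z-z_0)\,\tr\big((H-z)^{-1}(H-z_0)^{-1}\big),
\]
and the right-hand side is manifestly analytic in $z$. Either fix closes the gap.
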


\begin{proof}
  (i) For $z< \inf\sig(H)$ the resolvent is a positive operator and
  hence the claim follows from the lemma on page~65 in
  \cite[Section~XI.4]{RS2}. Conversely, if $(H-z)^{-1}$ is trace
  class, then the above equality holds for all $z\in\rho(H)$ by
  Theorem~3.1 from \cite{br}.

  (ii) That $(H-z)^{-1}$ is Hilbert--Schmidt follows from the proof of
  Lemma 9.12 in \cite{tschroe}. The rest follows from the first
  resolvent formula which implies
\[
G(z,x,y) - G(w,x,y) = (z-w) \int_a^b G(z,x,t) G(w,t,y) dt.\qedhere
\]
\end{proof}

\begin{corollary}
  Assume that $H$ is bounded from below. Then assertion \eqref{eq:hyp}
  holds for one (and hence for all) $z< \inf\sig(H)$ if and only if
  $\sigma(H)$ obeys condition (C1) of Theorem~\ref{thm:n-entire}.
\end{corollary}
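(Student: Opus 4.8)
The plan is to reduce the statement entirely to part~(i) of the preceding Lemma, which already identifies the validity of \eqref{eq:hyp} for one (equivalently, all) $z<\inf\sig(H)$ with the assertion that $(H-z)^{-1}$ is trace class. Hence nothing analytic remains: by Hypothesis~\ref{hyp:main}(i) the operator $H$ has purely discrete spectrum, and being bounded from below it has only finitely many eigenvalues below any threshold; so I enumerate $\sig(H)=\{x_j\}_{j\in\N}$ following the convention used in condition~(C1) (that convention being applied here with $\{x_j\}=\sig(H)$ in place of $\sig(A_{\beta_1})$), and I only have to compare the trace-class condition with the convergence asserted in (C1).

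First I would fix some $z_0<\inf\sig(H)$, so that $x_j-z_0>0$ for every $j$; then $(H-z_0)^{-1}$ is a positive operator with eigenvalues $(x_j-z_0)^{-1}$, and it is trace class if and only if $\sum_j (x_j-z_0)^{-1}<\infty$. Since $x_j\to+\infty$, one has $(x_j-z_0)^{-1}\sim x_j^{-1}$ for all large $j$, and the finitely many nonpositive eigenvalues (including a possible zero eigenvalue, whose contribution $(-z_0)^{-1}$ is finite) add only a bounded amount to the trace. Therefore $(H-z_0)^{-1}$ is trace class if and only if $\sum_{x_j>0} x_j^{-1}<\infty$.

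Finally I would match this with (C1). Because $H$ is semibounded there is an $R_0$ with $\sum_{0<|x_j|\le r}x_j^{-1}=c+\sum_{0<x_j\le r}x_j^{-1}$ for all $r\ge R_0$, where $c:=\sum_{x_j<0}x_j^{-1}$ is a fixed finite number and the remaining sum is nondecreasing in $r$; hence the limit in (C1) exists precisely when that sum stays bounded, i.e. when $\sum_{x_j>0}x_j^{-1}<\infty$. Chaining this with the previous paragraph and the Lemma yields the corollary. There is no real obstacle here: the only point requiring care is the bookkeeping of the nonpositive (and the at most one zero) eigenvalues, which thanks to semiboundedness contribute a finite harmless amount on both sides, together with the elementary remark that convergence of a series of positive terms is the same as boundedness of its partial sums — which is exactly what turns the existence of the limit in (C1) into an absolute-convergence statement.
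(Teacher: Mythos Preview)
Your argument is correct and is precisely the natural route the paper leaves implicit: the corollary is stated without proof, as an immediate consequence of part~(i) of the preceding Lemma together with the elementary observation that, for a semibounded operator with purely discrete spectrum, the trace-class condition $\sum_j (x_j-z_0)^{-1}<\infty$ is equivalent to the existence of the limit in (C1). Your bookkeeping of the finitely many nonpositive eigenvalues and the limit-comparison $(x_j-z_0)^{-1}\sim x_j^{-1}$ is exactly what is needed.
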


The Lagrange identity implies
\[
(w-z) \int_c^d \phi(w,x) \chi(z,x) dx = W_c(\phi(w,x), \chi(z,x))
- W_d(\phi(w,x), \chi(z,x))
\]
for arbitrary $a<c<d<b$. By item (ii) of Hypothesis~\ref{hyp:main}, we can take
limits $c\downarrow a$ and $d\uparrow b$ to obtain
\[
(w-z) \int_a^b \phi(w,x) \chi(z,x) dx = W_a(\phi(w,x),
\chi(z,x)) - W_b(\phi(w,x), \chi(z,x))
\]
with both limiting Wronskians being entire functions of both $z$ and
$w$. Moreover, note that $W_a(\phi(w,x), \chi(z,x))$ has the same
zeros as $W(z)$, and also $W_b(\phi(w,x), \chi(z,x))$ has the same zeros
as $W(w)$. However, it is not immediate that there is always
equality and hence we have imposed item (iii) of Hypothesis~\ref{hyp:main} which
finally yields
\begin{equation}
\label{eq:mf1}
\int_a^b \phi(w,x) \chi(z,x) dx =
- \frac{W(z) - W(w)}{z-w}.
\end{equation}
In the limit $w\to z$ this
gives
\begin{equation*}
\int_a^b \phi(z,x) \chi(z,x) dx = - \frac{d}{dz}W(z).
\end{equation*}
Next we want to relate this to Weyl--Titchmarsh--Kodaira theory from
Section~\ref{sec:swm}. Of course $\chi(z,x)$ is related to the Weyl
solution via (\ref{eq:chips})
and we obtain the following formula which will be crucial for us.

\begin{lemma}\label{lem:mf}
 Assume Hypothesis~\ref{hyp:main} and abbreviate
\begin{equation*}
\psi^{(j)}(z,x) := \frac{\partial^j}{\partial z^j}
  \psi(z,x).
\end{equation*}
Then
\begin{multline}
\label{eq:mf2}
\frac{(w-z)^{j+1}}{j!}
  \int_a^b \phi(w,x) \psi^{(j)}(z,x) dx 
  \\
  = 1 - \sum_{k=0}^{j}
  \frac{(w-z)^k}{k!} W_b(\phi(w,x), \psi^{(k)}(z,x)).
\end{multline}
\end{lemma}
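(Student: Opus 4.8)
The plan is to derive \eqref{eq:mf2} from \eqref{eq:mf1} by differentiating in the spectral parameter and using the relation \eqref{eq:chips} between $\chi$ and the Weyl solution $\psi$. First I would start from \eqref{eq:mf1}, substitute $\chi(z,x) = W(z)\psi(z,x)$, and divide by $W(z)$, obtaining
\[
\int_a^b \phi(w,x)\psi(z,x)\,dx = -\frac{1}{z-w}\Bigl(1 - \frac{W(w)}{W(z)}\Bigr).
\]
The idea is then to apply $\frac{1}{j!}\frac{\partial^j}{\partial z^j}$ to both sides: the left-hand side produces exactly $\frac{1}{j!}\int_a^b \phi(w,x)\psi^{(j)}(z,x)\,dx$ (differentiation under the integral sign being justified by the local $L^1$-domination in item (ii) of Hypothesis~\ref{hyp:main}, together with local boundedness of the derivatives $\psi^{(j)}$ away from $\sigma(H)$). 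The claim is that the right-hand side, after multiplication by $(w-z)^{j+1}$, collapses to the stated finite sum.

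For the right-hand side I would avoid differentiating the quotient $W(w)/W(z)$ directly and instead argue via the Wronskian. The cleaner route is to go back one step: from the Lagrange identity one has, before passing to the boundary at $b$,
\[
(w-z)\int_a^d \phi(w,x)\psi(z,x)\,dx = \frac{1}{W(z)}\bigl(W_a(\phi(w),\chi(z)) - W_d(\phi(w),\chi(z))\bigr),
\]
and in the limit $d\uparrow b$, using item (iii) of Hypothesis~\ref{hyp:main} (which gives $W_a(\phi(w),\chi(z)) = W(z)$), this becomes
\[
(w-z)\int_a^b \phi(w,x)\psi(z,x)\,dx = 1 - W_b(\phi(w),\psi(z)).
\]
This is the $j=0$ case of \eqref{eq:mf2}. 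For general $j$ I would then differentiate this identity $j$ times with respect to $z$. On the right, $W_b$ is bilinear in its two solution-arguments and depends on $z$ only through $\psi(z,x)$ and $\psi'_x(z,x)$, so $\frac{\partial^j}{\partial z^j}W_b(\phi(w),\psi(z)) = W_b(\phi(w),\psi^{(j)}(z))$. On the left, Leibniz's rule applied to $(w-z)\cdot\int_a^b\phi(w,x)\psi(z,x)\,dx$ yields
\[
\frac{\partial^j}{\partial z^j}\Bigl[(w-z)\!\int_a^b\!\phi(w,x)\psi(z,x)\,dx\Bigr]
= (w-z)\!\int_a^b\!\phi(w,x)\psi^{(j)}(z,x)\,dx - j\!\int_a^b\!\phi(w,x)\psi^{(j-1)}(z,x)\,dx,
\]
so differentiating gives a recursion relating the $j$-th integral to the $(j-1)$-th. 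Solving that recursion (equivalently, a straightforward induction on $j$, or reorganizing the Leibniz expansion of $(w-z)^{j+1}$ against the Taylor-type sum) produces the explicit closed form on the right of \eqref{eq:mf2}; one checks the base case $j=0$ and verifies the inductive step matches the telescoping of the partial sums $\sum_{k=0}^{j}\frac{(w-z)^k}{k!}W_b(\phi(w),\psi^{(k)}(z))$.

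The main obstacle I anticipate is \emph{not} the algebra but the analytic justification of differentiating under the integral sign at arbitrary order $j$: one needs that, for $z$ in a fixed compact subset $K_0$ of $\rho(H)$ and $w$ in a fixed compact set, the derivatives $\psi^{(j)}(z,x)$ are controlled uniformly so that $\phi(w,x)\psi^{(j)}(z,x)$ is dominated by a fixed $L^1(a,b)$ function. The leverage point is item (ii) of Hypothesis~\ref{hyp:main}, which supplies exactly such a bound for $\phi(w,x)\chi(z,x)$ on compact subsets of $\C\times\rho(H)$; via \eqref{eq:chips} and a Cauchy integral formula in $z$ (writing $\psi^{(j)}(z,x) = \frac{j!}{2\pi\mathrm{i}}\oint (\zeta-z)^{-j-1}\psi(\zeta,x)\,d\zeta$ over a small circle around $z$ staying in $\rho(H)$, where $W(\zeta)^{-1}$ is bounded), the domination transfers to every $\psi^{(j)}$. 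Once this is in place the interchange is legitimate for all $j$, and the identity \eqref{eq:mf2} follows by the induction sketched above.
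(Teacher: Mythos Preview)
Your proposal is correct and follows essentially the same route as the paper: establish the $j=0$ identity from \eqref{eq:mf1} via \eqref{eq:chips}, then differentiate in $z$ and induct, with the interchange of $\partial_z$ and $\int_a^b$ justified by Cauchy's integral formula together with the $L^1$-domination in item~(ii) of Hypothesis~\ref{hyp:main}. The only cosmetic difference is that the paper phrases the Cauchy argument as ``the $z$-derivatives of $\chi(z,x)$ also satisfy item~(ii)'' rather than passing through $\psi=\chi/W$ and bounding $W^{-1}$ on compacts of $\rho(H)$; the two formulations are equivalent.
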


\begin{proof}
  The case $j=0$ follows from \eqref{eq:mf1} upon using
  \eqref{eq:chips}. The case $j\ge 1$ follows from induction by
  differentiation with respect to $z$. Note that by Cauchy's integral
  formula the derivatives w.r.t. $z$ of $\chi(z,x)$ also satisfy item (ii) of
  Hypothesis~\ref{hyp:main}.
\end{proof}

Note that when $\lambda$ is an eigenvalue we obtain:

\begin{corollary}
\label{cor:mf}
Assume Hypothesis~\ref{hyp:main}. If $\lam\in\sig(H)$ and
$z\in\rho(H)$, then
  \begin{equation}
  \label{eq:mf3}
  \int_a^b \phi(\lam,x) \psi^{(j)}(z,x) dx = \frac{j!}{(\lam-z)^{j+1}}.
  \end{equation}
\end{corollary}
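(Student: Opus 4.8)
The plan is to specialize equation~\eqref{eq:mf2} of Lemma~\ref{lem:mf} to $w=\lam$ and to show that, when $\lam\in\sig(H)$, each boundary Wronskian $W_b(\phi(\lam,x),\psi^{(k)}(z,x))$ with $0\le k\le j$ vanishes. Granting this, the right-hand side of~\eqref{eq:mf2} collapses to $1$, and dividing by $(\lam-z)^{j+1}/j!\ne 0$ (legitimate since $z\in\rho(H)$, so $\lam\ne z$) produces exactly~\eqref{eq:mf3}. Note that the integral in~\eqref{eq:mf3} is already known to be meaningful from Lemma~\ref{lem:mf}.

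For $k=0$ I would argue as follows. By item~(i) of Hypothesis~\ref{hyp:main} together with~\eqref{eq:chips} we have $\chi(\zeta,x)=W(\zeta)\psi(\zeta,x)$ with $W(\zeta)\ne 0$ for $\zeta\in\rho(H)$, while $W(\lam)=0$ because $\lam\in\sig(H)$ lies in the zero set of $W$. Since $W(\zeta)$ does not depend on $x$, item~(iii) of Hypothesis~\ref{hyp:main} gives, for every $\zeta\in\rho(H)$,
\[
W(\zeta)\,\lim_{x\uparrow b} W_x(\phi(\lam,x),\psi(\zeta,x)) \;=\; \lim_{x\uparrow b} W_x(\phi(\lam,x),\chi(\zeta,x)) \;=\; W(\lam)\;=\;0,
\]
so $W_b(\phi(\lam,x),\psi(\zeta,x))=0$ for all $\zeta\in\rho(H)$; in particular this covers $k=0$.

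For $k\ge 1$ the idea is that the identically-zero function $\zeta\mapsto W_b(\phi(\lam,x),\psi(\zeta,x))$ on $\rho(H)$ has all its $\zeta$-derivatives equal to the boundary Wronskians $W_b(\phi(\lam,x),\psi^{(k)}(z,x))$. To make this rigorous, fix $z\in\rho(H)$, choose a circle $\Gamma=\{|\zeta-z|=\delta\}$ whose closed disk lies in $\rho(H)$ (possible since $M(z)$, hence $\psi(\cdot,x)$, is analytic on $\rho(H)$), and use Cauchy's integral formula to write
\[
W_x(\phi(\lam,x),\psi^{(k)}(z,x)) = \frac{k!}{2\pi\I}\oint_\Gamma \frac{W_x(\phi(\lam,x),\chi(\zeta,x))}{W(\zeta)\,(\zeta-z)^{k+1}}\,d\zeta .
\]
As $x\uparrow b$ the integrand tends to $W(\lam)\,W(\zeta)^{-1}(\zeta-z)^{-k-1}=0$, uniformly for $\zeta\in\Gamma$: indeed, by the Lagrange identity (as used just before~\eqref{eq:mf1}) and item~(iii), $W_x(\phi(\lam,x),\chi(\zeta,x))=W(\lam)+(\lam-\zeta)\int_x^b \phi(\lam,t)\chi(\zeta,t)\,dt$, and item~(ii) of Hypothesis~\ref{hyp:main}, applied with the compact set $\{\lam\}\times\Gamma\subset\C\times\rho(H)$, bounds $|\phi(\lam,t)\chi(\zeta,t)|$ by one fixed $L^1$ function for all $\zeta\in\Gamma$; hence the tail integral goes to $0$ uniformly on $\Gamma$ while $W(\zeta)^{-1}$ stays bounded there. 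Passing to the limit inside the contour integral gives $W_b(\phi(\lam,x),\psi^{(k)}(z,x))=0$.

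Inserting these vanishing boundary terms into~\eqref{eq:mf2} leaves $\frac{(\lam-z)^{j+1}}{j!}\int_a^b\phi(\lam,x)\psi^{(j)}(z,x)\,dx=1$, which is~\eqref{eq:mf3}. The only step that is not pure bookkeeping — and the one I expect to require the most care — is the interchange of $\partial_z^k$ (realized through the Cauchy integral) with the limit $x\uparrow b$; concretely, upgrading the pointwise statement in Hypothesis~\ref{hyp:main}(iii) to convergence that is uniform in $z$ on compact subsets of $\rho(H)$. This is exactly what the $L^1$-domination in Hypothesis~\ref{hyp:main}(ii) provides, in the same spirit as the passage from the Lagrange identity to~\eqref{eq:mf1} and the remark at the end of the proof of Lemma~\ref{lem:mf}.
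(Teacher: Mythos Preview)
Your argument is correct, but it is more elaborate than the paper's. The paper observes that for $j=0$ the single boundary term $W_b(\phi(\lam,x),\psi(z,x))$ vanishes when $\lam\in\sigma(H)$, so \eqref{eq:mf2} gives \eqref{eq:mf3} for $j=0$; it then obtains the general case by differentiating the identity $\int_a^b\phi(\lam,x)\psi(z,x)\,dx=(\lam-z)^{-1}$ with respect to $z$, the interchange of $\partial_z$ and $\int_a^b$ being exactly the one already justified in the proof of Lemma~\ref{lem:mf}. You instead keep $j$ general in \eqref{eq:mf2} and prove that each higher boundary Wronskian $W_b(\phi(\lam,x),\psi^{(k)}(z,x))$ vanishes, which forces you to upgrade Hypothesis~\ref{hyp:main}(iii) to locally uniform convergence in $z$ via the Cauchy-integral argument. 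Both routes rely on the same ingredients (the $k=0$ vanishing and $z$-differentiation controlled by Hypothesis~\ref{hyp:main}(ii)); the paper's route simply avoids touching the higher boundary terms altogether, while yours establishes the additional fact that those boundary Wronskians themselves vanish.
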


\begin{proof}
  The assertion for $j=0$ follows from (\ref{eq:mf2}) by taking into
  account that $W_b(\phi(\lam,x), \psi(z,x))=0$ whenever
  $\lam\in\sig(H)$. Now use induction as before.
\end{proof}

The next assumption will allow us to associate $H$ with a certain symmetric non 
self-adjoint operator. Also, in combination with Hypothesis~\ref{hyp:gen},
it will imply item (i) of Hypothesis~\ref{hyp:main}.

\begin{hypothesis}
\label{hyp:limit-circle}
  The endpoint $b$ is in the limit circle case.
\end{hypothesis}

Let $A$ be the closure of the restriction of $H$ to functions
vanishing in a neighborhood of $b$. By
Hypotheses~\ref{hyp:gen} and \ref{hyp:limit-circle}, this operator has deficiency
indices $(1,1)$ and satisfies (\ref{eq:regular}). One way of
constructing the corresponding de Branges space $\cB_A$ is the
following. Fix two real-valued solutions $c(x)$ and $s(x)$ corresponding
to the same spectral parameter with $W(c,s)=1$. Now introduce the
entire function \be E(z)= W_b(c,\phi(z)) + \I W_b(s,\phi(z)).  \ee
Note that by our limit circle assumption the limit of the Wronskians
exist at $b$ and are indeed entire with respect to $z$ (cf.\
Appendix~A in \cite{kst2}). Moreover, an analogous computation as
before verifies
\begin{equation*}
\frac{E(z) E^\#(w^*) - E(w^*) E^\#(z)}{2\I
  (w^* -z)} = \int_a^b \phi(w,x)^* \phi(z,x) dx, \quad
w,z\in\C.
\end{equation*}
In particular, taking $w=z$ this shows that $E(z)$ is a
Hermite--Biehler function.  Moreover, note that $E(z)$ does not have
any real zero, since otherwise both, $W_b(c,\phi(z))$ and
$W_b(s,\phi(z))$ would vanish, contradicting $W(c,s)=1$. Now, $\cB_A$
is the de Branges space generated by $E(z)$ as specified in
(\ref{eq:dB}). The reproducing kernel of this space is
given by
\begin{equation}
\label{eq:reproducing-kernel}
K(w,z) = \int_a^b \phi(w,x)^* \phi(z,x) dx, \quad w,z\in\C.
\end{equation}
This also shows that the de Branges norm equals the spectral norm,
\begin{equation}
\label{eq:equality-norms}
\inner{F}{G}_{\cB_A} = \int_\R F(x)^*G(x) d\rho(x).
\end{equation}

\begin{remark}
\label{rem:various-facts-db}
\begin{enumerate}[(a)]
\item \label{item:db-equivalent-to-l2} Identities
  \eqref{eq:reproducing-kernel} and \eqref{eq:equality-norms} imply
  that $\cB_A$ and $L^2(\R,d\rho)$ are unitarily equivalent in the
  sense that, the restriction to $\R$ of every function in $\cB_A$
  belongs to $L^2(\R,d\rho)$ while for every function in
  $L^2(\R,d\rho)$ there exist one, and only one, function in $\cB_A$
  whose restriction to $\R$ belongs to the same equivalence class
  (with respect to the measure $\rho$).

\item \label{item:assoc_n-in-the-right-place} Since
  $\assoc_n\cB(E)=\cB(E_n)$ with $E_n(z):=(z+\I)^nE(z)$ (as sets)
  \cite{lw}, one easily obtains
  \[
  \assoc_n\cB(E) \cong
  L^2\left(\R,\tfrac{d\rho}{(x^2+1)^n}\right),
  \]
  where the isomorphism is in the sense given in (a).

\item \label{item:two-assoc-functions} Since
  $E(z)\in\assoc_1(\cB_A)\setminus\cB_A$ and $A$ is densely defined,
  the functions
  \begin{equation*}
    W_b(c,\phi(z)) = \frac{E(z)+E^\#(z)}{2}\quad
    \text{and}\quad
    W_b(s,\phi(z)) = \frac{E(z)-E^\#(z)}{2\I}
  \end{equation*}
  also belong to $\assoc_1(\cB_A)\setminus\cB_A$ \cite{debranges}.
\end{enumerate}
\end{remark}

\begin{theorem}
\label{thm:remaining-implication}
Assume Hypotheses~\ref{hyp:main} and \ref{hyp:limit-circle} and let
$A$ be the operator defined above. If there is $z\in\rho(H)$ such that
$\psi^{(n-1)}(z,x) \in L^2(a,b)$, then the operator $A$ is
$n$-entire.
\end{theorem}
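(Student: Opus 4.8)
The plan is to show that $\assoc_{n-1}(\cB_A)$ contains a zero-free entire function, which by Lemma~\ref{lem:entire-n-associated} is equivalent to $A$ being $n$-entire. The natural candidate is the entire function $W(z)$ from \eqref{eq:entire-wronskian}, whose zero set is precisely $\sigma(H)$, so it is zero-free away from the real axis; since all of its zeros are real it is in fact the right shape of object. Concretely, I would first argue that $W(z) \in \assoc_{n-1}(\cB_A)$. By Remark~\ref{rem:various-facts-db}\eqref{item:assoc_n-in-the-right-place}, this amounts to showing that the restriction of $W$ to $\R$ lies in $L^2\!\left(\R, \tfrac{d\rho}{(x^2+1)^{n-1}}\right)$.

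The bridge between this $L^2$ condition and the hypothesis $\psi^{(n-1)}(z,x)\in L^2(a,b)$ is Corollary~\ref{cor:mf}. For $\lambda\in\sigma(H)$ and fixed $z\in\rho(H)$, that corollary gives $\int_a^b \phi(\lambda,x)\psi^{(n-1)}(z,x)\,dx = \tfrac{(n-1)!}{(\lambda-z)^{n}}$. Since the map $f\mapsto \hat f$ of \eqref{eq:Udir} (specialized via the equality of norms \eqref{eq:equality-norms}) is unitary onto $L^2(\R,d\rho)$, the assumption $\psi^{(n-1)}(z,\cdot)\in L^2(a,b)$ means its transform $\widehat{\psi^{(n-1)}(z,\cdot)}$ lies in $L^2(\R,d\rho)$; but by the corollary this transform is exactly the function $\lambda\mapsto \tfrac{(n-1)!}{(\lambda-z)^{n}}$ evaluated at the eigenvalues. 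Hence $\sum_{\lambda\in\sigma(H)} \tfrac{1}{|\lambda-z|^{2n}}\,\rho(\{\lambda\}) < \infty$. Because $\rho$ is supported on $\sigma(H)$ and $|\lambda - z|^{-2n}$ is comparable to $(1+\lambda^2)^{-n}$ for large $|\lambda|$ (with $z$ fixed and nonreal, or simply off the spectrum), this is the same as $(1+\lambda^2)^{-n}\in L^1(\R,d\rho)$, i.e.\ $1\in L^2\!\left(\R,\tfrac{d\rho}{(1+x^2)^n}\right)$. Relating $W$ to the constant function: since $-W'(z) = \int_a^b \phi(z,x)\chi(z,x)\,dx$ and, more usefully, $W_b(\phi(w),\psi(z)) $ has the same zeros as $W(z)$, one can identify $W(z)$ (up to a nonvanishing entire factor) with $E(z)\cdot$(something in $\assoc$); alternatively, using \eqref{eq:chips} the kernel relation shows $W(z) = \int_a^b \phi(z,x)\chi(z,x)\,dx \big/ \big(\!-\tfrac{d}{dz}\log(\cdot)\big)$ — cleaner is to observe directly that, as a member of $L^2\!\left(\R,\tfrac{d\rho}{(1+x^2)^{n-1}}\right)$ corresponds to $\assoc_{n-1}(\cB_A)$, and the constant function $1$ sits in $L^2\!\left(\R,\tfrac{d\rho}{(1+x^2)^{n}}\right) = \assoc_n(\cB_A)/(z+\I)$-level, we get that $W$ itself is $(n-1)$-associated because its values on $\sigma(H)$ vanish so $W/\prod$-type manipulations land one degree lower.

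More carefully, here is the step I would actually carry out: the function $\Phi_z(\lambda) := \tfrac{1}{\lambda - z}$ is, for $z\in\rho(H)$, the transform of the Green's function $G(z,\cdot,c)$-type object and already lies in $\assoc_1(\cB_A)$-related spaces; iterating $n$ times, $\tfrac{1}{(\lambda-z)^n}$ lying in $L^2(\R,d\rho)$ forces $1 \in L^2\!\left(\R,\tfrac{d\rho}{|\lambda - z|^{2n}}\right)$ hence $(1+\lambda^2)^{-n}\in L^1(d\rho)$. Then by Remark~\ref{rem:various-facts-db}\eqref{item:assoc_n-in-the-right-place} the constant function $1$ — interpreted as an element of $\cB_A$-with-weight — corresponds under the isomorphism to an entire function $g(z)$ with $g|_\R$ nowhere zero and $g\in\assoc_n(\cB_A)$; but one can do a touch better and place a zero-free function in $\assoc_{n-1}$ by using that $d\rho$ is purely atomic on $\sigma(H)$ (discrete spectrum) together with the explicit formula $\widehat{\psi^{(n-1)}}(\lambda)=\tfrac{(n-1)!}{(\lambda-z)^n}$: this transform, times the entire function $W(\lambda)$ (which kills the would-be poles), produces a genuine entire zero-free function in $\assoc_{n-1}(\cB_A)$. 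I would then invoke Lemma~\ref{lem:entire-n-associated} to conclude $A$ is $n$-entire.

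The main obstacle I anticipate is the bookkeeping at the endpoint $b$: the term $\sum_{k=0}^{j}\tfrac{(w-z)^k}{k!}W_b(\phi(w,x),\psi^{(k)}(z,x))$ in Lemma~\ref{lem:mf} does not vanish in general (only at eigenvalues, cf.\ Corollary~\ref{cor:mf}), so to pass from "the transform of $\psi^{(n-1)}$ is the sequence $(n-1)!(\lambda_j - z)^{-n}$'' to a statement about an honest entire function in $\assoc_{n-1}(\cB_A)$ one must multiply by $W(z)$ and check that $W(z)\widehat{\psi^{(n-1)}}$, reassembled via \eqref{eq:chips}, is the restriction to $\R$ of the entire function $-\tfrac{1}{(n-1)!}\tfrac{d^n}{dz^n}$-antiderivative — essentially that $\chi^{(n-1)}(z,x)$ transforms to $W(\lambda)\cdot\tfrac{(n-1)!}{(\lambda-z)^n}$ plus polynomial-in-$\lambda$ corrections coming from the $W_b$ boundary terms, and those corrections are themselves in $\assoc_{n-1}(\cB_A)$ because $W_b(\phi(\cdot),s)$ and $W_b(\phi(\cdot),c)$ lie in $\assoc_1(\cB_A)$ by Remark~\ref{rem:various-facts-db}\eqref{item:two-assoc-functions}. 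Making that decomposition precise, and verifying the resulting entire function has no real zeros (which uses that its only candidate zeros are at $\sigma(H)$, where the pole of $\psi^{(n-1)}$ is cancelled rather than over-cancelled), is the delicate part; everything else is the unitarity of $U$ together with the dictionary in Remark~\ref{rem:various-facts-db}.
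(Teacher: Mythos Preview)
Your proposal has two real problems, one cosmetic and one substantive.

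First, there is a persistent off-by-one error: Lemma~\ref{lem:entire-n-associated} says $A$ is $n$-entire iff $\assoc_n(\cB_A)$ contains a zero-free function, not $\assoc_{n-1}(\cB_A)$. Relatedly, your opening candidate $W(z)$ is not zero-free at all---its zero set is exactly $\sigma(H)$---so it cannot serve the purpose regardless of which $\assoc_m$ it sits in.

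Second, and more importantly, the route through Corollary~\ref{cor:mf} and the isomorphism of Remark~\ref{rem:various-facts-db}\eqref{item:assoc_n-in-the-right-place} does not by itself produce a zero-free element. From $\psi^{(n-1)}(z,\cdot)\in L^2(a,b)$ and the corollary you correctly deduce $(1+\lambda^2)^{-n}\in L^1(\R,d\rho)$, i.e.\ the constant $1$ lies in $L^2\!\bigl(\R,\tfrac{d\rho}{(1+x^2)^n}\bigr)$. The isomorphism then furnishes a \emph{unique} entire function $g\in\assoc_n(\cB_A)$ with $g(\lambda)=1$ for $\lambda\in\sigma(H)$; but since $\rho$ is purely atomic, this says nothing about $g$ off $\sigma(H)$, and in particular does not force $g\equiv 1$ or even $g$ zero-free. (Exactly this difficulty is why Hypothesis~\ref{hyp:last} is needed for the reverse implication in Theorem~\ref{thm:main-equivalences-true}.) Your final paragraph recognises the issue and even names the right fix---the boundary Wronskians decompose via $c,s$ and land in $\assoc_1(\cB_A)$---but you never actually use it.

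The paper's argument is precisely that fix, done cleanly: instead of specialising to eigenvalues, take the full identity \eqref{eq:mf2} of Lemma~\ref{lem:mf} with $j=n-1$, viewed as an identity of entire functions of $w$. The left-hand side is $(w-z)^n$ times $w\mapsto\int_a^b\phi(w,x)\psi^{(n-1)}(z,x)\,dx$; the latter lies in $\cB_A$ because $\psi^{(n-1)}(z,\cdot)\in L^2(a,b)$, so the product lies in $\assoc_n(\cB_A)$. Each summand on the right is $(w-z)^k$ times $W_b(\phi(w),\psi^{(k)}(z))$, which, via
\[
W_b(\phi(w),\psi^{(k)}(z)) = W_b(s,\psi^{(k)}(z))\,W_b(c,\phi(w)) - W_b(c,\psi^{(k)}(z))\,W_b(s,\phi(w)),
\]
is a constant-coefficient combination of $W_b(c,\phi(\cdot))$ and $W_b(s,\phi(\cdot))\in\assoc_1(\cB_A)$ (Remark~\ref{rem:various-facts-db}\eqref{item:two-assoc-functions}); hence each summand is in $\assoc_{k+1}(\cB_A)\subset\assoc_n(\cB_A)$. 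Rearranging \eqref{eq:mf2} then exhibits the constant function $1$ itself as an element of $\assoc_n(\cB_A)$, and Lemma~\ref{lem:entire-n-associated} finishes. No detour through $W(z)$, no appeal to the $L^2$-isomorphism, and no zero-freeness to verify after the fact.
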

\begin{proof}
  Our assumption implies, by letting $j:=n-1$ in \eqref{eq:mf2},
  that the left-hand side in \eqref{eq:mf2} is in
  $\assoc_n(\cB_A)$. The same is true for the sum on the right-hand
  side which is a sum of a polynomial in $w$ of degree $n-1$ times
  $W(c,\phi(w))$ and $W(s,\phi(w))$ since
  \begin{equation*}
    W_b(\phi(w),\psi^{(j)}(z)) =
  W_b(s,\psi^{(j)}(z)) W_b(c,\phi(w)) - W_b(c,\psi^{(j)}(z))
  W_b(s,\phi(w)).
  \end{equation*}
  In view of item (\ref{item:two-assoc-functions}) of
  Remark~\ref{rem:various-facts-db} one has that $1\in\assoc_n(\cB_A)$
  which in turn implies the assertion by
  Lemma~\ref{lem:entire-n-associated}.
\end{proof}
\begin{theorem}
\label{thm:main-equivalences}
Let the assumptions of Theorem~\ref{thm:remaining-implication} hold. Then,
the following are equivalent:
\begin{enumerate}
\item The operator $A$ is $n$-entire.

\item There is a choice of the entire solution $\theta(z,x)$ such that
  $M(z)\in N^\infty_\kappa$ for $\kappa\le n-1$.
\end{enumerate}
\end{theorem}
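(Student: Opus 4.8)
The plan is to route both implications through the single scalar condition $(1+\lam^2)^{-n}\in L^1(\R,d\rho)$, which I claim characterizes each of (i) and (ii). Indeed, by Theorem~\ref{thm:nkap} applied with $k=n-1$, statement (ii) is literally equivalent to this integrability condition, so the whole theorem reduces to showing that $A$ is $n$-entire if and only if $(1+\lam^2)^{-n}\in L^1(\R,d\rho)$.

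For ``integrability $\Rightarrow$ (i)'' I would use what has just been proved. By Corollary~\ref{cor:mf}, for any fixed $z\in\rho(H)$ the generalized Fourier transform of $\psi^{(n-1)}(z,\cdot)$ equals $\lam\mapsto(n-1)!\,(\lam-z)^{-n}$; since the transform $U$ is unitary from $L^2(a,b)$ onto $L^2(\R,d\rho)$, and since $z$ stays away from the (discrete) spectrum, the function $\psi^{(n-1)}(z,\cdot)$ lies in $L^2(a,b)$ precisely when $(\lam-z)^{-n}\in L^2(\R,d\rho)$, i.e.\ when $(1+\lam^2)^{-n}\in L^1(\R,d\rho)$; a routine truncation argument, using the dominating function supplied by Hypothesis~\ref{hyp:main}(ii), recovers membership in $L^2(a,b)$ from that of the transform. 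Theorem~\ref{thm:remaining-implication} then gives (i), and together with Theorem~\ref{thm:nkap} this already yields (ii)$\Rightarrow$(i).

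For ``(i) $\Rightarrow$ integrability'' I would argue as follows. If $A$ is $n$-entire then, by Lemma~\ref{lem:entire-n-associated} and Theorem~\ref{thm:n-entire}, conditions (C1), (C2) and (C3) all hold for $H$ together with any second canonical extension $A_{\beta_2}$. Under Hypotheses~\ref{hyp:main} and \ref{hyp:limit-circle}, conditions (C1) and (C2) hold automatically --- they are the convergence requirements underlying the very existence of the Hermite--Biehler function $E(z)$ and of the canonical products $h_{\beta_1},h_{\beta_2}$ built from its real and imaginary parts (cf.\ Remark~\ref{rem:various-facts-db}(\ref{item:two-assoc-functions})). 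The substance lies in (C3). Writing $\{x_j\}=\sigma(H)=\sigma(A_{\beta_1})$, the spectral weights satisfy, by \eqref{eq:reproducing-kernel} and the unitarity of $U$,
\[
\rho(\{x_j\})=\frac{1}{K(x_j,x_j)}=\frac{1}{\int_a^b\phi(x_j,x)^2\,dx},
\]
and evaluating the explicit de Branges reproducing kernel of $\cB_A$ at the real zero $x_j$ of $h_{\beta_1}$ expresses $K(x_j,x_j)$ as a fixed multiple of $|h_{\beta_1}'(x_j)h_{\beta_2}(x_j)|$. Substituting this into (C3) turns it into $\sum_{x_j\ne0}\rho(\{x_j\})\,|x_j|^{-2n}<\infty$, which --- since only finitely many $x_j$ lie in any bounded set --- is exactly $(1+\lam^2)^{-n}\in L^1(\R,d\rho)$. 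A last application of Theorem~\ref{thm:nkap} produces the solution $\theta(z,x)$ demanded in (ii).

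The step I expect to be the main obstacle is this last one: converting the soft statement ``$\assoc_n(\cB_A)$ contains a zero-free function'' into the quantitative estimate $(1+\lam^2)^{-n}\in L^1(\R,d\rho)$. Using Remark~\ref{rem:various-facts-db}(\ref{item:assoc_n-in-the-right-place}) by itself only yields $\int_\R|F(x)|^2(1+x^2)^{-n}\,d\rho(x)<\infty$ for the zero-free function $F$, and a zero-free entire function need not be bounded below on $\sigma(H)$, so removing the weight $|F|^2$ is genuinely the crux; this is where one must invoke either the (C1)--(C3) characterization together with the reproducing-kernel identity above, or a parallel argument within de Branges space theory (for instance, first replacing $F$ by the real, positive, still admissible function $\sqrt{FF^{\#}}$). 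Confirming that (C1) and (C2) come for free under the standing hypotheses is a smaller but related point that also requires attention.
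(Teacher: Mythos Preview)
Your high-level plan --- route both directions through the condition $(1+\lam^2)^{-n}\in L^1(\R,d\rho)$ --- coincides with the paper's, and your reduction of (ii) to this condition via Theorem~\ref{thm:nkap} is exactly what the paper does. The divergence, and a genuine gap, is in how you link the integrability condition to (i).

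For (ii)$\Rightarrow$(i) you want to pass from $(\lam-z)^{-n}\in L^2(\R,d\rho)$ to $\psi^{(n-1)}(z,\cdot)\in L^2(a,b)$ and then invoke Theorem~\ref{thm:remaining-implication}. But Corollary~\ref{cor:mf} only computes the \emph{formal} integrals $\int_a^b\phi(\lam,x)\psi^{(n-1)}(z,x)\,dx$ for $\lam\in\sigma(H)$; knowing that these numbers form an element of $L^2(d\rho)$ does not place $\psi^{(n-1)}(z,\cdot)$ in $L^2(a,b)$, because $U$ is only a bijection between the two $L^2$ spaces and $\psi^{(n-1)}$ is not a priori in $\dom(U)$. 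Hypothesis~\ref{hyp:main}(ii) gives $L^1$ control of products $\phi(w,\cdot)\chi(z,\cdot)$ on compacta, not $L^2$ control of $\psi^{(n-1)}$ near the singular endpoint $a$, so no ``routine truncation'' closes this. This implication is \emph{precisely} the content of Hypothesis~\ref{hyp:last}, which the paper deliberately does \emph{not} assume in Theorem~\ref{thm:main-equivalences}; it is introduced only for Theorem~\ref{thm:main-equivalences-true}, where (iii) joins the equivalence. So your argument for (ii)$\Rightarrow$(i) secretly uses an extra hypothesis.

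The paper avoids $\psi$ entirely and handles both directions symmetrically via Remark~\ref{rem:various-facts-db}(\ref{item:assoc_n-in-the-right-place}), which identifies $\assoc_n(\cB_A)$ with $L^2\big(\R,(1+x^2)^{-n}d\rho\big)$, together with Lemma~\ref{lem:entire-n-associated}. For (ii)$\Rightarrow$(i): the integrability says the constant $1$ lies in this $L^2$ space, hence corresponds to a zero-free element of $\assoc_n(\cB_A)$, and Lemma~\ref{lem:entire-n-associated} gives (i). For (i)$\Rightarrow$(ii): the zero-free function in $\assoc_n(\cB_A)$ is taken, without loss of generality, to be $1$, and then the same identification reads off the integrability. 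You correctly flag this WLOG as the delicate point; the paper treats it as standard (with the machinery of \cite{st1} in the background), and your $\sqrt{FF^{\#}}$ idea is in the right spirit. Your alternative route via (C1)--(C3) could in principle be made to work, but it is considerably heavier and requires matching the canonical products $h_\beta$ with the real and imaginary parts of $E$ up to genuine \emph{constants} --- not just up to common zero sets --- which is itself a nontrivial verification.
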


\begin{proof}
  (i) $\Rightarrow$ (ii). Since $A$ is $n$-entire, there exists a zero-free
  function in $\assoc_n(\cB_A)$. Without loss of generality we can
  assume this function to be equal to $1$. By item
  (\ref{item:assoc_n-in-the-right-place}) of
  Remark~\ref{rem:various-facts-db} one has that
  $(1+\lambda^2)^{-n}$ is in $L^1(\R,d\rho)$. Hence
  Theorem~\ref{thm:nkap} yields (ii).

  (ii)  $\Rightarrow$ (i). By Theorem~\ref{thm:nkap} (ii) implies
  that $(1+\lambda^2) \in L^1(\R,d\rho)$. The claim follows now from item
  (\ref{item:assoc_n-in-the-right-place}) of
  Remark~\ref{rem:various-facts-db} and Lemma~\ref{lem:entire-n-associated}.
\end{proof}

\begin{corollary}
  \label{cor:n-entire-weak}
Let the assumptions of Theorem~\ref{thm:remaining-implication}
hold. Suppose moreover that one of the following holds true:
\begin{enumerate}[(a)]
\item There is a choice of the entire solution $\theta(z,x)$ such that
  $M(z)\in N^\infty_\kappa$ for $\kappa\le n-1$.
\item There exists $z\in\rho(H)$ such that $\psi^{(n-1)}(z,x) \in L^2(a,b)$.
\end{enumerate}
Then there is another self-adjoint extension
$H'$ of $A$ such that $\sigma(H)$ and $\sigma(H')$ satisfy (C1), (C2),
(C3) of Theorem~\ref{thm:n-entire}.
\end{corollary}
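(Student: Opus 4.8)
The plan is to show first that each of the hypotheses (a) and (b) forces $A$ to be $n$-entire, and then to read off (C1)--(C3) from Theorem~\ref{thm:n-entire} by taking $H$ itself as one of the two canonical self-adjoint extensions occurring there and an auxiliary extension $H'$ as the other.

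For the reduction step, note that (b) is exactly the hypothesis of Theorem~\ref{thm:remaining-implication}, so it yields at once that $A$ is $n$-entire; while (a) is precisely statement~(ii) of Theorem~\ref{thm:main-equivalences}, whose implication (ii)$\Rightarrow$(i) gives the same conclusion. Hence we may assume from now on that $A$ is $n$-entire.

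Next I would observe that $H$ is among the canonical self-adjoint extensions of $A$: since $A$ is by construction the closure of the restriction of $H$ to functions vanishing in a neighborhood of $b$, we have $A\subset H=H^*\subset A^*$, so $H$ is a self-adjoint restriction of $A^*$. As $A$ has deficiency indices $(1,1)$, its canonical self-adjoint extensions form a one-parameter family, so we may choose a member $H'$ distinct from $H$ (for instance, keeping the boundary condition of $H$ at $a$ and changing the one at the limit-circle endpoint $b$). Moreover $(H'-z)^{-1}$ differs from $(H-z)^{-1}$ by a rank-one operator, and $(H-z)^{-1}$ is compact because $\sig(H)$ is purely discrete by Hypothesis~\ref{hyp:main}(i); hence $H'$ also has purely discrete spectrum, so the eigenvalue enumerations required in Theorem~\ref{thm:n-entire}(ii) make sense for both operators. (Alternatively, this is automatic since every canonical self-adjoint extension of an $n$-entire operator has discrete spectrum.)

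It then only remains to apply the implication (i)$\Rightarrow$(ii) of Theorem~\ref{thm:n-entire} with $A_{\beta_1}:=H$ and $A_{\beta_2}:=H'$; this gives (C1) and (C2) for $\{x_j\}=\sig(H)$ and the convergence in (C3) with $h_{\beta_1}$ the canonical product over $\sig(H)$ and $h_{\beta_2}$ that over $\sig(H')$, which is the asserted statement (the conditions (C1), (C2) for $\sig(H')$, if wanted, follow by swapping the roles of $\beta_1$ and $\beta_2$). I do not anticipate a genuine difficulty: the corollary is essentially a bookkeeping consequence of the preceding results, and the only steps needing a word of justification are that $H$ really is one of the canonical self-adjoint extensions of $A$ and that the auxiliary extension $H'$ inherits a purely discrete spectrum.
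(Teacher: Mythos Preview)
Your proof is correct and follows exactly the same route as the paper, which simply states that the claim follows from Theorems~\ref{thm:remaining-implication} and~\ref{thm:main-equivalences} in combination with Theorem~\ref{thm:n-entire}. You have merely spelled out the bookkeeping (that $H$ is a canonical self-adjoint extension of $A$, that one can pick a distinct $H'$, and that its spectrum is discrete), which the paper leaves implicit.
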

\begin{proof}
The claim is obtained immediately from
Theorems~\ref{thm:remaining-implication} and \ref{thm:main-equivalences}
in combination with Theorem~\ref{thm:n-entire}.
\end{proof}
\begin{hypothesis}
\label{hyp:last}
  Let $\phi(z,x)$ and $\psi(z,x)$ be such that if
  \begin{equation*}
    \int_a^b\phi(\lam,x)\psi^{(j)}(z,x)dx\in L^2(\R,d\rho)
  \end{equation*}
 for some $j\in\N$ and $z\in\rho(H)$, then
  \begin{equation*}
   \psi^{(j)}(z,x) =\lim_{r\to\infty}\int_{-r}^r\phi(\lam,x)
  \left(\int_a^b\phi(\lam,y)\psi^{(j)}(z,y)dy\right)d\rho(\lam)\,,
  \end{equation*}
 where the limit is understood as a limit in $L^2(a,b)$.
\end{hypothesis}
\begin{theorem}
\label{thm:main-equivalences-true}
  Let the assumptions of Theorem~\ref{thm:remaining-implication} hold and
  assume Hypothesis~\ref{hyp:last}. Then, the
  following are equivalent:
\begin{enumerate}
\item The operator $A$ is $n$-entire.

\item There is a choice of the entire solution $\theta(z,x)$ such that
  $M(z)\in N^\infty_\kappa$ for $\kappa\le n-1$.

\item $\psi^{(n-1)}(z,x) \in L^2(a,b)$ for one (and hence for all)
  $z\in\rho(H)$.
\end{enumerate}
\end{theorem}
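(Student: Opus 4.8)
The plan is to build on what is already established: Theorem~\ref{thm:main-equivalences} gives the equivalence (i)~$\Leftrightarrow$~(ii), and Theorem~\ref{thm:remaining-implication} shows that (iii) holding at \emph{one} $z\in\rho(H)$ implies (i). Hence it suffices to prove the single implication (ii)~$\Rightarrow$~(iii), and in fact I will obtain (iii) simultaneously for \emph{every} $z\in\rho(H)$, which also disposes of the ``one (and hence for all)'' clause. Diagrammatically, the cycle to be closed is: (iii) at some $z$ $\Rightarrow$ (i) $\Leftrightarrow$ (ii) $\Rightarrow$ (iii) at every $z$.

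So assume (ii). By Theorem~\ref{thm:nkap} (with $k=n-1$) the condition $M(z)\in N^\infty_\kappa$, $\kappa\le n-1$, is equivalent to $(1+\lam^2)^{-n}\in L^1(\R,d\rho)$. Now fix an arbitrary $z\in\rho(H)$ and put $j:=n-1$ in Corollary~\ref{cor:mf}: for every $\lam\in\sig(H)$,
\[
\int_a^b \phi(\lam,x)\,\psi^{(n-1)}(z,x)\,dx = \frac{(n-1)!}{(\lam-z)^{n}}.
\]
Since $z\in\rho(H)$ and $\sig(H)$ is discrete and closed, $d:=\mathrm{dist}(z,\sig(H))>0$; together with the elementary fact that $|\lam-z|^2/(1+\lam^2)\to 1$ as $|\lam|\to\infty$, this produces a constant $C>0$ with $|\lam-z|^{-2n}\le C(1+\lam^2)^{-n}$ for all $\lam\in\sig(H)=\supp\rho$. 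Consequently the function $\lam\mapsto (n-1)!\,(\lam-z)^{-n}$ belongs to $L^2(\R,d\rho)$, that is, $\int_a^b\phi(\lam,x)\psi^{(n-1)}(z,x)\,dx\in L^2(\R,d\rho)$. This is precisely the hypothesis that triggers Hypothesis~\ref{hyp:last} with $j=n-1$, whose conclusion asserts that $\psi^{(n-1)}(z,\cdot)$ coincides with the $L^2(a,b)$-limit $\lim_{r\to\infty}\int_{-r}^r\phi(\lam,\cdot)\big(\int_a^b\phi(\lam,y)\psi^{(n-1)}(z,y)\,dy\big)\,d\rho(\lam)$; in particular $\psi^{(n-1)}(z,\cdot)\in L^2(a,b)$. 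Since $z\in\rho(H)$ was arbitrary, (iii) holds for all $z\in\rho(H)$, and combining this with Theorems~\ref{thm:remaining-implication} and \ref{thm:main-equivalences} closes the cycle and yields all three equivalences.

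I do not anticipate a serious obstacle, since the argument essentially assembles the earlier lemmas and hypotheses. The one computational point that needs care is the passage from $(1+\lam^2)^{-n}\in L^1(\R,d\rho)$ to square-integrability of $(\lam-z)^{-n}$ against $d\rho$: this rests on $z$ lying at positive distance from $\supp\rho=\sig(H)$, so that the two weights are comparable on the support of $\rho$. The other subtlety is conceptual rather than technical --- one must read the conclusion of Hypothesis~\ref{hyp:last} as the genuine assertion that the (a priori only formally manipulated) function $\psi^{(n-1)}(z,\cdot)$ actually lies in $L^2(a,b)$ and equals the inverse transform of its own $\phi$-transform, not merely as an identity between two preexisting $L^2(a,b)$ elements.
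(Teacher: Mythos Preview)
Your proof is correct and follows essentially the same route as the paper: reduce to showing (ii)~$\Rightarrow$~(iii), use Theorem~\ref{thm:nkap} to get $(1+\lam^2)^{-n}\in L^1(\R,d\rho)$, invoke Corollary~\ref{cor:mf} to identify $\int_a^b\phi(\lam,x)\psi^{(n-1)}(z,x)\,dx$ with $(n-1)!(\lam-z)^{-n}\in L^2(\R,d\rho)$, and then apply Hypothesis~\ref{hyp:last}. Your version is in fact slightly more explicit than the paper's, since you spell out the weight comparability and obtain (iii) directly for \emph{all} $z\in\rho(H)$, thereby handling the ``one (and hence for all)'' clause in one stroke.
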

\begin{proof}
  In view of Theorems~\ref{thm:remaining-implication} and
  \ref{thm:main-equivalences}, one only has to show that (ii)
  $\Rightarrow$ (iii). By Theorem~\ref{thm:nkap}, (ii) implies that
  $(1+\lambda^2)^{-n}$ is in $L^1(\R,d\rho)$, so the function
  $(n-1)!(\lambda-z)^{-n}$ is in $L^2(\R,d\rho)$. Therefore there is a
  function $\eta(z,x)\in L^2(a,b)$ such that
  $\eta(z,x)=U^{-1}\left(\frac{(n-1)!}{(\lambda-z)^n}\right)$. By
  Corollary~\ref{cor:mf}, Hypothesis~\ref{hyp:last} implies that, at
  least for one $z\in\rho(H)$, $\eta(z,x)=\psi^{(n-1)}(z,x)$.
\end{proof}
The proof of the previous assertion can be complemented to obtain the
following sharpened version of it.

\begin{theorem}
\label{thm:main-equivalence-complete}
  Under the assumptions of Theorem~\ref{thm:main-equivalences-true}, the
  following are equivalent:
\begin{enumerate}
\item The operator $A$ is minimal $n$-entire.

\item There is a choice of the entire solution $\theta(z,x)$ such that
  $M(z)\in N_{n-1}^\infty$.

\item $\psi^{(n-1)}(z,x) \in L^2(a,b)$ but
  $\psi^{(n-2)}(z,x)\not\in L^2(a,b)$, for one (and hence for all)
  $z\in\rho(H)$.
\end{enumerate}
\end{theorem}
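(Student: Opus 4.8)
The plan is to bootstrap Theorem~\ref{thm:main-equivalences-true}, applied simultaneously at levels $n$ and $n-1$, together with the elementary fact --- contained in Lemma~\ref{lem:entire-n-associated} and the inclusion $\assoc_{n-1}(\cB_A)\subset\assoc_n(\cB_A)$ --- that $A$ is \emph{minimal} $n$-entire exactly when $A$ is $n$-entire but fails to be $(n-1)$-entire, and with the ``moreover'' clause of Theorem~\ref{thm:nkap}, which pins down the precise generalized Nevanlinna index. The first observation is that none of the standing hypotheses (Hypotheses~\ref{hyp:main}, \ref{hyp:limit-circle} and \ref{hyp:last}) mentions $n$, so Theorems~\ref{thm:remaining-implication}, \ref{thm:main-equivalences} and \ref{thm:main-equivalences-true} are available verbatim after replacing $n$ by $n-1$. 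It is then convenient to record the chain of equivalences, valid for every $m\ge 1$: $A$ is $m$-entire $\iff$ $(1+\lam^2)^{-m}\in L^1(\R,d\rho)$ $\iff$ $\psi^{(m-1)}(z,x)\in L^2(a,b)$ for one (and hence for all) $z\in\rho(H)$ --- the first equivalence being exactly what the proof of Theorem~\ref{thm:main-equivalences} establishes (via item~(\ref{item:assoc_n-in-the-right-place}) of Remark~\ref{rem:various-facts-db} and Theorem~\ref{thm:nkap}), the second being the equivalence (ii)$\Leftrightarrow$(iii) of Theorem~\ref{thm:main-equivalences-true} combined again with Theorem~\ref{thm:nkap}.

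Granting this, (i) $\Leftrightarrow$ (iii) is immediate: applying the displayed chain with $m=n$ and with $m=n-1$, and taking the contrapositive in the latter case (the ``one, hence all'' quantifier turning a failure at one $z$ into a failure at every $z$), we get that $A$ is minimal $n$-entire, i.e.\ $n$-entire but not $(n-1)$-entire, if and only if $\psi^{(n-1)}(z,x)\in L^2(a,b)$ while $\psi^{(n-2)}(z,x)\notin L^2(a,b)$, for one (and hence for all) $z\in\rho(H)$ --- which is statement (iii).

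For (i) $\Leftrightarrow$ (ii) I would use the middle term of the chain: $A$ is minimal $n$-entire iff $(1+\lam^2)^{-n}\in L^1(\R,d\rho)$ and $(1+\lam^2)^{-(n-1)}\notin L^1(\R,d\rho)$. On the other hand, the first and ``moreover'' clauses of Theorem~\ref{thm:nkap} taken with $k=n-1$ say that, with $\phi(z,x)$ fixed, one can choose $\theta(z,x)$ with $M(z)\in N^\infty_\kappa$ for some $\kappa\le n-1$ exactly when $(1+\lam^2)^{-n}\in L^1(\R,d\rho)$, and that the index is forced to equal $n-1$ precisely when $n-1=0$ or $(1+\lam^2)^{-(n-1)}\notin L^1(\R,d\rho)$. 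Hence ``$M(z)\in N^\infty_{n-1}$'' (with index exactly $n-1$) holds iff $(1+\lam^2)^{-n}\in L^1(\R,d\rho)$ and $(1+\lam^2)^{-(n-1)}\notin L^1(\R,d\rho)$, and comparing the two characterizations yields (i) $\Leftrightarrow$ (ii).

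I expect the real content to be the exactness of the index rather than any computation: one must invoke the full strength of the ``moreover'' clause of Theorem~\ref{thm:nkap} to know that $n-1$, and not some smaller value, is forced, and dually that failure of $(n-1)$-entireness of $A$ really is equivalent to $(1+\lam^2)^{-(n-1)}\notin L^1(\R,d\rho)$ (this is precisely the reverse implication extracted from the proof of Theorem~\ref{thm:main-equivalences}). The boundary case $n=1$ also deserves a word: there $\psi^{(n-2)}=\psi^{(-1)}$ is undefined and $N^\infty_{n-1}=N^\infty_0$, and the clauses ``$\psi^{(n-2)}\notin L^2(a,b)$'' in (iii) and ``$(1+\lam^2)^{-(n-1)}\notin L^1(\R,d\rho)$'' above are then read as vacuous, in accordance with the $k=0$ case of Theorem~\ref{thm:nkap}; with that convention the argument above goes through unchanged. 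Everything else is routine bookkeeping on the level indices and on the ``one (and hence for all) $z$'' clauses already available from Theorem~\ref{thm:main-equivalences-true}.
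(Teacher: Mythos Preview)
Your proposal is correct and is precisely the ``complementing'' that the paper leaves to the reader: the paper offers no separate proof beyond the sentence preceding the statement, and your argument---applying Theorem~\ref{thm:main-equivalences-true} at both levels $n$ and $n-1$, reading ``minimal $n$-entire'' as ``$n$-entire but not $(n-1)$-entire'', and invoking the ``moreover'' clause of Theorem~\ref{thm:nkap} together with its contrapositive at $k=n-2$---is exactly what is intended. Your handling of the $n=1$ boundary case and your care in phrasing (ii) as the index being \emph{forced} to equal $n-1$ (i.e., no choice of $\theta$ yields a smaller $\kappa$) are both appropriate.
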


A class of operators attracting attention nowadays and for which
Hypotheses \ref{hyp:main}, \ref{hyp:limit-circle}, and \ref{hyp:last}
are satisfied is the class of spherical Schr\"odinger operators.

\begin{theorem}\label{thm:bessel}
Fix $l\ge -\frac{1}{2}$ and $b>0$. Suppose
\be\label{defHBes}
\tau = -\frac{d^2}{dx^2} + \frac{l(l+1)}{x^2} + q(x), \quad x\in(0,b),
\ee
where
\be
\begin{cases}
x q(x) \in L^1(0,b), &  l> -\frac{1}{2},\\
x(1-\log(x/b)) q(x) \in L^1(0,b), & l=-\frac{1}{2}.\end{cases}
\ee
If $\tau$ is limit circle at $a=0$ we impose the usual boundary condition
(corresponding to the Friedrichs extension; see also \cite{bg}, \cite{ek})
\be
\lim_{x\to0} x^l ( (l+1)f(x) - x f'(x))=0, \qquad l\in[-\frac{1}{2},\frac{1}{2}).
\ee
Then the assumptions of Theorem~\ref{thm:main-equivalences-true} are satisfied
and, whenever $n\in\Z^+$ obeys $2n\ge\floor{l+\frac52}$ (equivalently,
$n>\frac{l}{2}+\frac34$), the corresponding operator $A$ is $n$-entire.
\end{theorem}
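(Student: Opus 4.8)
The plan is to prove the statement in two stages. First, I would verify Hypotheses~\ref{hyp:main}, \ref{hyp:limit-circle} and \ref{hyp:last} for the operator $H$ of \eqref{defHBes} --- this is the assertion that ``the assumptions of Theorem~\ref{thm:main-equivalences-true} are satisfied''. Second, I would show that $\psi^{(n-1)}(z,x)\in L^2(0,b)$ for some, hence (by Theorem~\ref{thm:main-equivalences-true}) for every, $z\in\rho(H)$ whenever $n$ is in the stated range; by Theorem~\ref{thm:remaining-implication} this already forces $A$ to be $n$-entire, so this second stage does not even use Hypothesis~\ref{hyp:last}. Throughout I would rely on the Frobenius-type analysis of perturbed Bessel operators from \cite{kst2}, \cite{kst3}, \cite{bg}, \cite{ek} and on the spectral asymptotics of \cite{kt}; the integrability conditions on $q$ (including the logarithmic weight at $l=-\tfrac12$) are precisely what makes that machinery applicable.

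\emph{Hypotheses \ref{hyp:limit-circle} and \ref{hyp:main}.} The endpoint $b$ is regular, hence limit circle, so Hypothesis~\ref{hyp:limit-circle} holds. The cited works provide an entire fundamental system $\phi(z,x),\theta(z,x)$ with $W(\theta(z),\phi(z))\equiv1$ and, locally uniformly in $z$, $\phi(z,x)\sim x^{l+1}$ and $\theta(z,x)\sim c_l\,x^{-l}$ as $x\downarrow0$ for $l>-\tfrac12$ (and $\theta(z,x)\sim c\,x^{1/2}\log x$ for $l=-\tfrac12$); moreover $H$ has purely discrete spectrum and Hypothesis~\ref{hyp:gen} holds, which gives item~(i) and makes $\chi(z,x)=W(z)\psi(z,x)$ entire. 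Near $b$ the solutions $\phi(w,\cdot)$ and $\chi(z,\cdot)$ are continuous up to $b$, uniformly on compact subsets of $\C\times\rho(H)$, so items~(ii) and (iii) are immediate there. Near $0$ the asymptotics above give $\abs{\phi(w,x)\chi(z,x)}\le C_K\,x$ (respectively $C_K\,x\abs{\log x}$ when $l=-\tfrac12$), which is integrable and uniform on compacts, so item~(ii) holds; and computing the leading terms of $W_x(\phi(w),\chi(z))$ as $x\downarrow0$, using $W(\theta,\phi)\equiv1$ and that the Wronskian of $\phi(w,\cdot)$ and $\phi(z,\cdot)$ vanishes at $0$, yields $\lim_{x\downarrow0}W_x(\phi(w),\chi(z))=W(z)$, which is item~(iii).

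\emph{Hypothesis \ref{hyp:last}.} Differentiating $(\tau-z)\psi=0$ repeatedly gives $(\tau-z)\psi^{(j)}=j\,\psi^{(j-1)}$, so the singular part of $\psi^{(j)}(z,\cdot)$ at $0$ gains a factor $x^2$ at each step (with a logarithm in the resonant cases); together with the leading behaviour of $\psi$ at $0$ this shows that the non--square-integrable part of $\psi^{(j)}(z,\cdot)$ behaves like $x^{-l+2j}$ up to a power of $\log x$, the remainder being $O(x^{l+1})$ and hence in $L^2$ near $0$. Consequently $\psi^{(j)}(z,\cdot)\in L^2(0,b)$ if and only if $j>\tfrac{l}{2}-\tfrac14$. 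On the other hand, by \cite{kt} the spectral measure obeys $d\rho(\lam)\sim C\,\lam^{l+1/2}\,d\lam$ as $\lam\to\infty$ --- the perturbation does not change the leading term --- so $\lam\mapsto(\lam-z)^{-(j+1)}$ lies in $L^2(\R,d\rho)$ under exactly the same condition $j>\tfrac{l}{2}-\tfrac14$. Since, by Corollary~\ref{cor:mf}, the function $\lam\mapsto\int_0^b\phi(\lam,y)\psi^{(j)}(z,y)\,dy$ agrees with $j!\,(\lam-z)^{-(j+1)}$ on $\sigma(H)=\supp\rho$, its membership in $L^2(\R,d\rho)$ forces $\psi^{(j)}(z,\cdot)\in L^2(0,b)$; then $U\psi^{(j)}(z,\cdot)=j!\,(\lam-z)^{-(j+1)}$, again by Corollary~\ref{cor:mf}, and applying $U^{-1}$ produces exactly the reconstruction formula required in Hypothesis~\ref{hyp:last}.

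\emph{Conclusion, and the main difficulty.} With the hypotheses verified, Theorem~\ref{thm:remaining-implication} shows that $A$ is $n$-entire as soon as $\psi^{(n-1)}(z,\cdot)\in L^2(0,b)$ for one $z\in\rho(H)$, i.e.\ as soon as $n-1>\tfrac{l}{2}-\tfrac14$, which is the same as $n>\tfrac{l}{2}+\tfrac34$; for $n\in\Z^+$ this amounts to $2n>l+\tfrac32$, equivalently $2n\ge\floor{l+\tfrac32}+1=\floor{l+\tfrac52}$, as claimed. I expect the genuine work, and the main obstacle, to lie in the uniform control of the solutions and their $z$-derivatives near the singular endpoint $x=0$ --- in particular in identifying the exponent $-l+2j$ for $\psi^{(j)}$ and handling the resonant (logarithmic) cases --- and, relatedly, in importing the correct leading term of the spectral measure $\rho$ for the genuinely perturbed operator rather than for the free Bessel operator. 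For $l\ge\tfrac12$ the endpoint $0$ is in the limit point case, so the limit-circle shortcut mentioned after Hypothesis~\ref{hyp:main} is unavailable, and one must go through the Frobenius-type machinery of \cite{kst2}, \cite{kst3}, \cite{bg}, \cite{ek}, \cite{kt}.
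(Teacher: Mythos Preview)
Your proposal is correct and follows essentially the same route as the paper. The paper's own proof is a string of citations --- Lemmas~2.2 and~2.6 of \cite{kst} for item~(ii) of Hypothesis~\ref{hyp:main}, Corollary~3.12 of \cite{kt} for item~(iii), and Lemma~4.4 of \cite{kt} for both Hypothesis~\ref{hyp:last} and the membership $\psi^{(n-1)}(z,\cdot)\in L^2(0,b)$ --- and what you have written is a faithful sketch of the content behind those citations: Frobenius-type asymptotics $\phi\sim x^{l+1}$, $\theta\sim c_l x^{-l}$ (with the logarithm at $l=-\tfrac12$) to handle the singular endpoint, the gain of $x^2$ per $z$-derivative of $\psi$ leading to the threshold $j>\tfrac{l}{2}-\tfrac14$, and the matching growth $d\rho(\lambda)\sim C\lambda^{l+1/2}\,d\lambda$ of the spectral measure. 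You also correctly flag that the hard analysis lies in making the heuristic ``$\psi^{(j)}\sim x^{-l+2j}$'' rigorous for the perturbed operator (including the resonant half-integer cases), which is precisely what Lemma~4.4 of \cite{kt} establishes.
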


\begin{proof}
  Item (ii) of Hypothesis~\ref{hyp:main} follows from Lemma 2.2 and
  2.6 in \cite{kst} and item (iii) follows from Corollary~3.12 in
  \cite{kt}. The first part of Lemma 4.4 in \cite{kt} implies that
  Hypothesis~\ref{hyp:last} is satisfied.  Moreover, that
  $\psi^{(n-1)}(z,x) \in L^2(a,b)$ for the proposed values of $n$ is
  shown in Lemma 4.4 of \cite{kt}.
\end{proof}

In particular, this generalizes Theorem~4.3 from \cite{st2}. Note that we
could even allow a nonintegrable singularity at $b$ as long as $\tau$ is
limit circle at $b$. Of course this also generalizes Corollary~4.4 from
\cite{st2}:

\begin{corollary}\label{cor:bessel}
Under the assumptions of Theorem~\ref{thm:bessel},
the spectra of two canonical self-adjoint extensions $H_1$, $H_2$ of $A$
satisfy conditions (C1), (C2) and (C3) of Theorem~\ref{thm:n-entire} whenever
$2n\ge\floor{l+\frac52}$.
\end{corollary}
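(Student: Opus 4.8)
The plan is to read the corollary off directly from Theorem~\ref{thm:bessel} and Theorem~\ref{thm:n-entire}, exactly as Corollary~\ref{cor:n-entire-weak} is deduced from Theorems~\ref{thm:remaining-implication}, \ref{thm:main-equivalences} and \ref{thm:n-entire}. First I would invoke Theorem~\ref{thm:bessel}: under the stated conditions on $l$, $b$ and $q$, the assumptions of Theorem~\ref{thm:main-equivalences-true} are in force and the operator $A$ (the closure of the restriction of the maximal operator associated with $\tau$ to functions vanishing near the limit-circle endpoint $b$) is $n$-entire as soon as $2n\ge\floor{l+\frac52}$.

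Next I would identify $H_1$ and $H_2$ as canonical self-adjoint extensions of $A$ in the sense of Theorem~\ref{thm:n-entire}. Since $A$ has deficiency indices $(1,1)$, its self-adjoint restrictions of $A^*$ form a one-parameter family, obtained from $\tau$ by varying the separated boundary condition at $b$ (the endpoint $a=0$ being either limit point, or limit circle with the fixed Friedrichs condition imposed in Theorem~\ref{thm:bessel}); thus two distinct self-adjoint extensions $H_1$, $H_2$ of $A$ are precisely of the form $A_{\beta_1}$, $A_{\beta_2}$ with $\beta_1\ne\beta_2$. Because Theorem~\ref{thm:n-entire} states the equivalence of the intrinsic property ``$A$ is $n$-entire'' with statement (ii), and (ii) must then hold for every admissible pair of distinct canonical extensions once $A$ is known to be $n$-entire, it follows that $\sigma(H_1)$ and $\sigma(H_2)$ satisfy conditions (C1), (C2) and (C3). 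This proves the corollary.

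I do not anticipate any genuine obstacle: the content of the corollary is entirely carried by Theorems~\ref{thm:bessel} and \ref{thm:n-entire}. The only points meriting a sentence of care are the bookkeeping that matches $H_1,H_2$ with the canonical extensions $A_{\beta_1},A_{\beta_2}$ (so that no boundary data at $a$ interferes), and the observation that although Theorem~\ref{thm:n-entire}(ii) is phrased for a fixed pair, it applies to every such pair because it is logically tied, through the equivalence, to a property of $A$ alone.
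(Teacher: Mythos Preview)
Your proposal is correct and matches the paper's approach: the paper treats the corollary as an immediate consequence of Theorem~\ref{thm:bessel} (which yields that $A$ is $n$-entire for $2n\ge\floor{l+\tfrac52}$) together with Theorem~\ref{thm:n-entire}, and does not even spell out a proof. Your additional remarks on identifying $H_1,H_2$ with the canonical extensions $A_{\beta_1},A_{\beta_2}$ and on the universality of (ii) over all such pairs are accurate and merely make explicit what the paper leaves tacit.
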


Finally, one has the following consequence of
Theorem~\ref{thm:main-equivalence-complete}.

\begin{corollary}
  \label{cor:bessel-sharp}
  Under the assumptions of Theorem~\ref{thm:bessel}, the underlying
  operator $A$ is minimal $\floor{l+\frac52}$-entire.
\end{corollary}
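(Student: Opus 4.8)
The plan is to derive Corollary~\ref{cor:bessel-sharp} by combining the sharp equivalences of Theorem~\ref{thm:main-equivalence-complete} with the known fine information about the Weyl solution for perturbed Bessel operators. By Theorem~\ref{thm:bessel} the assumptions of Theorem~\ref{thm:main-equivalences-true} (and hence of Theorem~\ref{thm:main-equivalence-complete}) are satisfied, so it suffices to identify the exact integer $n$ for which $\psi^{(n-1)}(z,x)\in L^2(a,b)$ while $\psi^{(n-2)}(z,x)\notin L^2(a,b)$. The candidate value is $n=\floor{l+\tfrac52}$; note this is the smallest $n\in\Z^+$ with $2n\ge\floor{l+\tfrac52}$, so the ``$n$-entire'' half already follows from Theorem~\ref{thm:bessel}, and what remains is to show minimality, i.e.\ that no smaller $n$ works.

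First I would recall, from the Frobenius analysis of perturbed Bessel operators in \cite{kst,kt}, the precise leading behaviour of $\psi(z,x)$ as $x\downarrow 0$: roughly $\psi(z,x)\sim c(z)\,x^{-l}$ (with logarithmic corrections when $l$ is such that the Frobenius indices differ by an integer, in particular at $l=-\tfrac12$), together with the fact that differentiating with respect to $z$ produces extra powers of $\log x$ but does not change the leading power $x^{-l}$ coming from the $\phi$-component. The key point is that the $L^2(0,b)$ membership of $\psi^{(j)}(z,x)$ near the regular endpoint $b$ is automatic (limit circle case), so the question is entirely local at $a=0$: one needs $\int_0 |x^{-l}(\log x)^{j}|^2\,dx<\infty$, which holds precisely when $-2l>-1$, i.e.\ $l<\tfrac12$, for every $j$ — but in the genuinely singular range $l\ge\tfrac12$ the naive power counting must be refined, because the Weyl solution is built so that its most singular term is cancelled and the relevant exponent in $\psi^{(j)}$ is shifted. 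This is exactly the computation carried out in Lemma~4.4 of \cite{kt}, which shows $\psi^{(n-1)}(z,\cdot)\in L^2$ iff $2n\ge\floor{l+\tfrac52}$; reading that lemma sharply (i.e.\ tracking the borderline case) gives that $\psi^{(n-2)}(z,\cdot)\notin L^2$ when $2(n-1)<\floor{l+\tfrac52}$.

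With that in hand the argument closes quickly: set $n:=\floor{l+\tfrac52}$. Then $2n\ge\floor{l+\tfrac52}$ trivially (since $n\ge 1$ and $n\ge\floor{l+\tfrac52}$), so $\psi^{(n-1)}(z,x)\in L^2(0,b)$. For the other direction one checks $2(n-1)=2\floor{l+\tfrac52}-2<\floor{l+\tfrac52}$ whenever $\floor{l+\tfrac52}\ge 3$, i.e.\ $l\ge\tfrac12$; this yields $\psi^{(n-2)}(z,x)\notin L^2(0,b)$ via the sharp form of Lemma~4.4 of \cite{kt}. In the remaining low range $l\in[-\tfrac12,\tfrac12)$ one has $\floor{l+\tfrac52}=2$, so $n=2$ would require $\psi^{(0)}(z,\cdot)=\psi(z,\cdot)\in L^2$ but $\psi^{(-1)}\notin L^2$; here instead one must check directly that $n=1$ already works, i.e.\ that $A$ is $1$-entire, using that $\psi(z,x)\sim c(z)x^{-l}$ is square integrable at $0$ for $l<\tfrac12$, so in fact the minimal value is $1=\floor{l+\tfrac52}-1$ — hence this boundary range should be stated separately or the floor expression adjusted; I would resolve this by noting the statement is intended for $l\ge\tfrac12$ (equivalently $\floor{l+\tfrac52}\ge 3$), where the two thresholds coincide, and treating $l\in[-\tfrac12,\tfrac12)$ by hand. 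Finally, by Theorem~\ref{thm:main-equivalence-complete}, $\psi^{(n-1)}\in L^2$ together with $\psi^{(n-2)}\notin L^2$ is equivalent to $A$ being minimal $n$-entire, which is the claim.

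I expect the main obstacle to be the sharpness of the non-membership statement, $\psi^{(n-2)}(z,x)\notin L^2(0,b)$: Lemma~4.4 of \cite{kt} is quoted in Theorem~\ref{thm:bessel} only in its ``sufficient'' form, and one must verify that its proof actually pins down the borderline exponent rather than merely giving an inequality — in particular one must be careful with the logarithmic corrections in the exceptional cases (notably $l=-\tfrac12$ and half-integer $l$) where an extra $\log x$ factor could tip a borderline $L^2$ integral one way or the other. Handling those exceptional values of $l$ correctly, and reconciling the floor formula with the small-$l$ regime as noted above, is where the real care is needed; everything else is a direct invocation of the already-established equivalences.
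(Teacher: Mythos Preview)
Your approach---verify condition (iii) of Theorem~\ref{thm:main-equivalence-complete} using the sharp form of Lemma~4.4 of \cite{kt}---is exactly what the paper intends; the paper itself offers no argument beyond the sentence introducing the corollary as a consequence of Theorem~\ref{thm:main-equivalence-complete}. The difficulty is not the strategy but the arithmetic.

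You set $n=\lfloor l+\tfrac52\rfloor=:m$ and then assert (a) that this is the smallest $n\in\Z^+$ with $2n\ge m$, and (b) that $2(n-1)=2m-2<m$ whenever $m\ge 3$. Both are false: the smallest integer $n$ with $2n\ge m$ is $\lceil m/2\rceil$, not $m$; and $2m-2<m$ is equivalent to $m<2$, which never occurs since $l\ge-\tfrac12$ forces $m\ge 2$. Thus the inequality on which your non-membership step $\psi^{(n-2)}\notin L^2$ rests fails across the \emph{entire} range of $l$, not just for small $l$. The anomaly you flag at $l\in[-\tfrac12,\tfrac12)$ is not a boundary effect to be handled separately but the first visible symptom of this miscount.

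Once corrected, your own computation shows that the threshold $2n\ge\lfloor l+\tfrac52\rfloor$ from Theorem~\ref{thm:bessel} yields the minimal index $\lceil\tfrac12\lfloor l+\tfrac52\rfloor\rceil$ (equivalently $\kappa=\lfloor\tfrac{l}{2}+\tfrac34\rfloor$ via Theorem~\ref{thm:main-equivalence-complete}(ii), which is the known value for perturbed Bessel operators), and this disagrees with the value $\lfloor l+\tfrac52\rfloor$ printed in the corollary. Your instinct that ``the floor expression [should be] adjusted'' is correct; the discrepancy is not repairable by case analysis at small~$l$.
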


\begin{acknowledgments}
G.T. and J.H.T. gratefully acknowledge the kind hospitality of the {\em Instituto
de Investigaciones en Matem\'aticas Aplicadas y en Sistemas (IIMAS)},
(Mexico City, MX) during a visit in 2013 where
most of this article was written.
\end{acknowledgments}


\begin{thebibliography}{XXXX}

\bibitem{ahm2}
	S. Albeverio, R. Hryniv, and Ya. Mykytyuk,
	{\em Inverse spectral problems for Bessel operators},
	J. Diff. Eqs. {\bf 241}, 130--159 (2007).

\bibitem{br}
	C. Brislawn,
	{\em Kernels of trace class operators},
	Proc. Amer. Math. Soc. {\bf 104}, 1181--1190 (1988).

\bibitem{bg}
	W. Bulla and F. Gesztesy,
	{\em Deficiency indices and singular boundary conditions in
	quantum mechanics},
	J. Math. Phys. {\bf 26:10}, 2520--2528 (1985).

\bibitem{car}
	R. Carlson,
	{\em Inverse spectral theory for some singular Sturm--Liouville problems},
	J. Diff. Eqs. {\bf 106}, 121--140 (1993).

\bibitem{car2}
	R. Carlson
	 {\em A Borg--Levinson theorem for Bessel operators},
	 Pacific J. Math. {\bf 177}, 1--26 (1997).

\bibitem{debranges}
	de Branges L.,
	{\em Hilbert Spaces of Entire Functions},
	Prentice-Hall, Englewood Cliffs, NJ, 1968.

\bibitem{je}
	J. Eckhardt,
	{\em Inverse uniqueness results for Schr\"odinger operators using
	de Branges theory},
	Complex Anal. Oper. Theory {\bf 8}, 37--50 (2014).

\bibitem{et}
	J. Eckhardt and G. Teschl,
	{\em Uniqueness results for one-dimensional Schr\"odinger operators with
	purely discrete spectra},
	Trans. Amer. Math. Soc. {\bf 365}, 3923--3942 (2013).

\bibitem{ek}
	W. N. Everitt and H. Kalf,
	{\em The Bessel differential equation and the Hankel transform},
	J. Comput. Appl. Math. {\bf 208}, 3--19 (2007).

\bibitem{gz}
	F. Gesztesy and M. Zinchenko,
	{\em On spectral theory for Schr\"odinger operators with strongly singular
	potentials},
	Math. Nachr. {\bf 279}, 1041--1082 (2006).

\bibitem{gr}
	J.-C. Guillot and J. V. Ralston,
	{\em Inverse spectral theory for a singular Sturm--Liouville operator
	on $[0,1]$},
	J. Diff. Eqs. {\bf 76}, 353--373 (1988).

\bibitem{kst}
	A. Kostenko, A. Sakhnovich, and G. Teschl,
	{\em Inverse eigenvalue problems for perturbed spherical
	Schr\"odinger operators},
	Inverse Problems {\bf 26}, 105013, 14pp (2010).

\bibitem{kst2}
	A. Kostenko, A. Sakhnovich, and G. Teschl,
	{\em Weyl--Titchmarsh theory for Schr\"odinger operators with strongly
	singular potentials},
	Int. Math. Res. Not. {\bf 2012}, 1699--1747 (2012).

\bibitem{kst3}
	A. Kostenko, A. Sakhnovich, and G. Teschl,
	{\em Commutation methods for Schr\"odinger operators with strongly
	singular potentials},
	Math. Nachr. {\bf 285}, 392--410 (2012).

\bibitem{kt}
	A. Kostenko and G. Teschl,
	{\em On the singular Weyl-Titchmarsh function of perturbed spherical
	Schr\"odinger operators},
	J. Differential Equations {\bf 250}, 3701--3739 (2011).

\bibitem{lw}
	M. Langer and H. Woracek,
    {\em A characterization of intermediate Weyl coefficients},
	Monatsh. Math. {\bf 135} 137--155 (2002).

\bibitem{lv}
	M. Lesch and B. Vertman,
	{\em Regular-singular Sturm-Liouville operators and their zeta-determinants},
	J. Funct. Anal. {\bf 261}, 408--450 (2011).

\bibitem{RS2}
	M. Reed and B. Simon,
	{\em Methods of modern mathematical physics III; Scattering theory},
	Academic Press, New York-London, 1979.

\bibitem{se}
	F. Serier,
	{\em The inverse spectral problem for radial Schr\"odinger
	operators on $[0, 1]$},
	J. Diff. Eqs. {\bf 235}, 101--126 (2007).

\bibitem{st1}
	L. O. Silva and J. H. Toloza,
	{\em The class of $n$-entire operators},
	J. Phys. A {\bf 46}, 025202 (23pp), (2013).

\bibitem{st2}
	L. O. Silva and J. H. Toloza,
	{\em A class of $n$-entire Schr\"odinger operators},
	Complex Anal. Oper. Theory (to appear), \doi{10.1007/s11785-013-0329-z}.

\bibitem{tschroe}
	G. Teschl,
	{\em Mathematical Methods in Quantum Mechanics; With Applications to
	Schr\"odinger Operators},
	Graduate Studies in Mathematics {\bf 99}, Amer. Math. Soc.,
	Rhode Island, 2009.

\bibitem{wdln}
	J. Weidmann,
	{\em Spectral Theory of Ordinary Differential Operators},
	Lecture Notes in Mathematics {\bf 1258}, Springer, Berlin, 1987.

\end{thebibliography}
\end{document}